\documentclass[preprint,12pt]{elsarticle}

\usepackage{amsmath,amsfonts,calrsfs,amssymb,color}
\usepackage{amsthm}
\usepackage{tikz}
\usetikzlibrary{arrows.meta,positioning}
\makeatletter
\renewenvironment{proof}[1][\proofname]{\par
  \pushQED{\qed}%
  \normalfont \topsep6\p@\@plus6\p@\relax
  \trivlist
  \item[\hskip\labelsep
        \itshape
    #1\@addpunct{}]\ignorespaces
}{%
  \popQED\endtrivlist\@endpefalse
}
\makeatother

\newtheorem{Theorem}{Theorem}[section]
\newtheorem{Definition}{Definition}[section]

\newtheorem{Lemma}[Theorem]{Lemma}
\newtheorem{Corollary}[Theorem]{Corollary}
\newtheorem{Remark}[Theorem]{Remark}

\newcommand{\R}{\mathbb R}
\newcommand{\N}{\mathbb N}

\let\oldproofname=\proofname
\renewcommand{\proofname}{\rm\sc{\oldproofname}}
\newcommand{\eps}{\varepsilon}

\journal{Journal of Functional Analysis}

\begin{document}

\begin{frontmatter}

\title{A Bootstrap Proof of the Abstract Wiener Space Theorem for Fr\'echet Spaces}

\author{Luciano Tubaro}
\address{Dipartimento di Matematica, Universit\`a di Trento, Povo, Italy}

\begin{abstract}
In this paper, we provide an alternative proof of a result due to Gross (1960s), extending it to the Fréchet space setting. Our argument adapts the proof technique developed by Bogachev in the Banach case, as presented in the monograph {\sl Gaussian Measures}, to the more general framework of separable Fréchet spaces, simplifying several steps along the way.
\end{abstract}

\begin{keyword}
Abstract Wiener space \sep Gaussian measure \sep Fr\'echet space \sep Cameron--Martin space \sep Gross measurability
\end{keyword}

\end{frontmatter}

\section{Introduction} In this paper we state and prove Theorem 1 and Theorem 2 for Fréchet spaces, adapting the proof techniques of Theorem 3.9.5 and Theorem 3.9.6, respectively, from the book {\sl Gaussian Measures} by Bogachev, where the analogous results are established in the Banach space setting using essentially the method of Kallianpur \cite{K}. Our goal is to extend this framework while simultaneously simplifying certain steps. Moreover, Definition 1 below generalises Definition 3.9.4 of that reference.

\medskip
For readers not acquainted with the classical theory of abstract Wiener spaces, we recall some known facts. Gross introduced 
 abstract Wiener spaces by considering a real separable Hilbert space $H$ equipped with the standard cylindrical (merely additive) Gaussian measure $\nu$ and a measurable seminorm $q$.
He took the completion $X$ of $H$ with respect to $q$: this is a Banach space, and he showed that the restriction of $\nu$ on cylindrical sets of $X$ extends as a $\sigma$-additive Gaussian measure on the Borel $\sigma$-algebra of $X$, via the  Carath\'eodory extension theorem. Kallianpur later gave an alternative proof using a sequence of Gaussian random variables and, in effect, the Kolmogorov extension theorem.  Both proofs are reported in the monograph by Kuo. Furthermore, Kuo considered the case in which the seminorm takes the form  $q(x)=|Tx|$, with $T$ an injective Hilbert-Schmidt operator;
 in this case $Q$ is measurable, hence the completion $X$ is again a Hilbert space carrying a Gaussian measure. 
 This Gaussian measure has also been studied directly in Kuo and in other references, notably Da Prato–Zabczyk.

 \medskip
The literature on locally convex spaces offers several routes to results of
this type, which differ substantially both in scope and in method.
Dudley, Feldman and Le Cam \cite{DFL} give two results relevant here. Their
Theorem~4 shows that if $\mu$ is a mean-zero Gaussian measure on a general
locally convex space $W$ satisfying a tightness hypothesis --- concentration
on convex, weakly compact sets --- then $\mu = n\circ\theta^{-1}$ for a
continuous linear map $\theta$ from a Hilbert space $H$ into $W$, where $n$
is the canonical normal distribution on $H$. This hypothesis is automatic
for separable Banach spaces, as Dudley--Feldman--Le Cam themselves note, but
is \emph{not} automatic for a general separable Fr\'echet space: $\mu$ is a
priori given only through its finite-dimensional (cylindrical) projections,
on the weak Borel sets $\sigma(W,W')$, and whether such a cylindrical
measure extends at all to a genuinely countably additive, tight Borel
measure for the strong Fr\'echet topology is exactly the abstract-Wiener-space
existence question itself --- their own Corollary~1.1 shows this extension
is automatic only when $W$ is $\sigma$-compact, which no infinite-dimensional
Fr\'echet space is. Supplying this tightness in the Fr\'echet setting is
therefore a substantive task, not a free corollary, and it is exactly what
Rajput, Zapa{\l}a, and the present paper each accomplish by different
routes. Their Theorem~5 treats the Fr\'echet case specifically, but for an
\emph{arbitrary} regular Borel measure $\mu$ on a Fr\'echet space $W$ (not
necessarily Gaussian, and already assumed regular): it
constructs $H$, a cylinder set measure $m$ on $H$ making $x\mapsto(x,\cdot)$
continuous into convergence in probability, and a continuous
$\theta\colon H\to W$ with $m\circ\theta^{-1}=\mu$, via a weighted
$L^2$-embedding technique unrelated to Rajput's or Zapa{\l}a's routes below.
Rajput \cite{R} explicitly notes that his own Fr\'echet-space theorem
``has a point of contact with, and follows from'' their Theorem~4, obtained
however ``by a somewhat different set of ideas''; Rajput's own route is
self-contained and specific to the Gaussian, Fr\'echet-space case: his
Theorem~4.2 shows that every zero-mean Gaussian measure on an arbitrary
separable Fr\'echet space $F$ is the $\sigma$-extension of the canonical
cylindrical measure of a separable Hilbert space $H$, continuously and
densely embedded in $F$, proceeding via the topological support of the
measure, described through the reproducing kernel Hilbert space of its
covariance function, together with an adaptation of the It\^o--Nisio
series-convergence techniques; this requires genuinely new arguments since
the Banach-space techniques of Kallianpur, Jain--Kallianpur, Kuelbs and
Sato --- used for the Banach-space case of the same statement --- do not
carry over verbatim to the Fr\'echet setting. Badrikian and Chevet
\cite{BC} address the more general ``espaces de Wiener'' problem of when a
weakly continuous linear map from a Hilbert space into an arbitrary
topological vector space (not necessarily locally convex) carries the
canonical Gaussian cylinder measure to a Radon measure, of which the
Fr\'echet case is only a special instance. Borell \cite{BO}, building on his
own theory of convex measures on locally convex spaces, develops the general
theory of Radon Gaussian measures on arbitrary locally convex spaces, without
singling out the Fr\'echet case. Zapa{\l}a \cite{Z} generalises Gross's
original completion method directly to an arbitrary separable Fr\'echet
space $M$ --- completing $H$ with respect to a measurable seminorm exactly as
Gross does for Banach spaces --- and, along the way, establishes a
Fr\'echet-space version of the Brunn--Minkowski inequality for Gaussian
measures.

\medskip
Our approach differs from all of these not merely in the class of spaces
considered, but in method. Rather than working at the level of general
locally convex spaces via a tightness/weak-compactness condition, as
Dudley--Feldman--Le Cam's Theorem~4 and Borell do, embedding through a
weighted $L^2$ construction for arbitrary regular measures, as
Dudley--Feldman--Le Cam's Theorem~5 does, adapting the completion argument
directly to $F$, as Zapa{\l}a does, or working through the topological
support and its RKHS description, as Rajput does, we bootstrap through an
auxiliary Hilbert-space abstract Wiener space $(E,H,\nu_1)$ together with an
explicit Borel linear isomorphism $L\colon E_0\to F_0$ between conull sets (Corollary~3
below),
transporting the classical Hilbert/Banach-space theory of Gross and
Kallianpur verbatim to the Fr\'echet setting.

\medskip
A close antecedent, in the classical Banach-space abstract-Wiener setting, is Fernholz
\cite{Fe}, who shows that the measure on any infinite-dimensional abstract Wiener space can be
transformed into that of any other by a measurable linear transformation; this is not automatic
from our results, nor conversely, and the two constructions should not be confused. Fernholz
works throughout with Banach codomains, and the transformation is not required to intertwine the
Cameron--Martin embeddings, nor to arise from an explicit series construction admitting a Borel
inverse on a specified conull set. Here the codomain is Fr\'echet, the map $L$ of
Corollary~\ref{cor.transport} is exhibited explicitly rather than obtained abstractly, restricts
to the identity of $H$ under the two Cameron--Martin embeddings ($L\circ j_E=\iota$), and is a
genuine Borel isomorphism $E_0\to F_0$ between specified conull sets -- exactly the compatibility
with the Cameron--Martin structure that the nonlinear-transformation applications of Section~4
require. Accordingly the contribution of this paper is better stated, soberly, as: (1) a new
proof of Gross's extension to Fr\'echet spaces (Theorem~\ref{uno}--\ref{due}), simplifying and, in
two places, correcting the argument as presented by Bogachev; (2) the explicit construction of a
linear Borel isomorphism between conull sets, compatible with the Cameron--Martin embedding
(Corollary~\ref{cor.transport}), in the spirit of, but not subsumed by, Fernholz's theorem; and
(3) the use of this isomorphism to transport Bogachev's local nonlinear change-of-variables
theorem (Section~4) to the Fr\'echet setting. We do not claim that Gross's extension to
Fr\'echet spaces is itself new, since it already follows, by different routes, from Rajput
\cite{R} and Zapa{\l}a \cite{Z}.

\section{Some Prerequisites}
The following  Definition and Lemma are taken verbatim from Bogachev's {\sl Gaussian Measures} 
(Definition 3.9.2 and Lemma 3.9.3).

\medskip
{\bf Definition (Gross measurable seminorm)}.{\it  Let $H$ be a separable Hilbert space and let $P(H)$ be the set of all orthogonal projections in $H$ with finite-dimensional range. A seminorm $q$ on $H$ is called measurable in the sense of Gross (or Gross measurable) if, for every $\eps>0$, there exists a finite-dimensional orthogonal projection $P_\eps\in P(H)$ such that
$$\nu(x\in H\colon q(Px)>\eps)<\eps,\qquad\forall P\in P(H), P\perp P_\eps,$$
 where $\nu$ is the canonical cylindrical Gaussian measure on $H$.}\\
 The set appearing on the left-hand side is $\nu$-measurable, since the function $q$ is continuous on every finite-dimensional subspace. The following lemma shows that $q$ is in fact continuous on all of $H$.

{\bf Lemma}. {\it Every seminorm measurable in the sense of Gross is continuous.}

 \begin{Definition}[\bf Abstract Wiener space over a Fréchet space]\label{d.one}
 A triple $(\iota,H,F)$ is called an abstract Wiener space if $F$ is a separable Fr\'echet space,
 $H$ is a separable Hilbert space, $\iota\colon H\to F$ a continuous linear embedding with dense range,
 and the defining seminorms $q_k$ of $F$, or more precisely the compositions $q_k\circ \iota$, are Gross measurable on $H$. We assume as usual, $q_1\le q_2\le\ldots$.
 \end{Definition}
 
 We also introduce  the associated quasinorm (also called pseudonorm)
 $$
|x| = \sum_{k=1}^\infty\frac{1}{2^k}\frac{q_k(x)}{1+q_k(x)}
$$
The quasinorm $|\cdot|$ is Gross measurable. Indeed, setting
$$
r_n(x)=\sum_{k=n+1}^\infty\frac{1}{2^k}\frac{q_k(x)}{1+q_k(x)}
$$
one has $r_n(x)\to 0$ as $n\to\infty$,uniformly. To verify the Gross measurability of $|\cdot|$, let $\eps>0$
and write
$$
\nu\big\{x\in H :\;\;|Px|>\varepsilon\big\}=\nu\Big\{x\in H :\;\; \sum_{k=1}^n\frac{1}{2^k}\frac{q_k(Px)}{1+q_k(Px)}+r_n(Px)>\varepsilon\Big\}
$$
Since
$$
\sum_{k=1}^n\frac{1}{2^k}\frac{q_k(Px)}{1+q_k(Px)}+r_n(Px)\le \sum_{k=1}^n\frac{1}{2^k}\;q_k(Px)+\frac{1}{2^n}
\le q_n(Px)+\frac{1}{2^n},
$$
given $\eta>0$ with $0<\eta<\varepsilon$, we choose $n$ such that $\tfrac{1}{2^n}<\eta$, set
$\lambda:=\varepsilon-\eta$, and choose a Gross witness $P_n$ for the seminorm $q_n$ at threshold
$\lambda$, i.e.\ $P_n$ such that $\nu\big\{x\in H:\;q_n(Px)>\lambda\}<\lambda$ for all
$P\perp P_n$. Then $\{|Px|>\varepsilon\}\subseteq\{q_n(Px)+\eta>\varepsilon\}=
\{q_n(Px)>\varepsilon-\eta\}=\{q_n(Px)>\lambda\}$, so
$$
\nu\big\{x\in H :\;\;|Px|>\varepsilon\big\}\le \nu\big\{x\in H :\;\;q_n(Px)>\lambda\}<\lambda=\varepsilon-\eta< \varepsilon
$$
 \section{Main results}
 \begin{Theorem}\label{uno}
 Let $(\iota,H,F)$ be an abstract Wiener space as in the Definition 1. Then the canonical cylindrical Gaussian measure $\nu$ on $H$
 extends to a countably additive Gaussian measure on $F$ (via $\iota$, i.e.\ $\nu\circ\iota^{-1}$ is countably additive). Moreover, $H$ coincides with the Cameron--Martin space
 of this measure on $F$.
  \end{Theorem}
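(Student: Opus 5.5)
Following Kallianpur's method as in Bogachev's Theorem 3.9.5, I build the measure as the law of a random series. Take an orthonormal basis $(e_n)$ of $H$ and i.i.d.\ standard Gaussians $\xi_n$ on a probability space $(\Omega,\mathcal P)$. The goal is to show that $S_N=\sum_{n\le N}\xi_n\,\iota e_n$ converges $\mathcal P$-a.s.\ in $F$. For that it suffices to show convergence in the complete quasinorm $|\cdot|$, equivalently in each $q_k\circ\iota$. The Gross measurability of $|\cdot|$, proved just before the theorem, is the tool.

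For each $j$ I pick a Gross witness $P_j$ for $|\cdot|$ at threshold $2^{-j}$. I refine the basis so that the ranges of $P_1,P_2,\dots$ are contained in increasing spans $\mathrm{span}\{e_1,\dots,e_{N_j}\}$, replacing $P_j$ by the projection onto the span of the first $N_j$ vectors, which still works since any $P\perp$ the larger projection is $\perp P_j$. Then $P=\sum_{N_j<n\le N_{j+1}}(\cdot,e_n)e_n$ is orthogonal to $P_j$. Since $\xi_n$ has the same law as $(x,e_n)$ under $\nu$, this gives
\[
\mathcal P\big(|S_{N_{j+1}}-S_{N_j}|>2^{-j}\big)<2^{-j}.
\]
By Borel--Cantelli the subsequence $S_{N_j}$ is a.s.\ Cauchy in $F$, hence convergent.

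To control the full sequence, I apply the same bound to the block between $N_j$ and $m\le N_{j+1}$. This yields convergence in probability of $S_m$. By the Itô--Nisio theorem, or directly by Lévy's maximal inequality for symmetric independent summands applied to the metric $|\cdot|$, which works in separable Fréchet spaces via the seminorms $q_k$, the convergence is almost sure. Only convergence of the subsequence is actually needed to define a measure: I set $\mu=\mathcal P\circ S^{-1}$, where $S=\lim_j S_{N_j}$. I then check that $\mu$ agrees with $\nu\circ\iota^{-1}$ on cylinder sets. For $\ell\in F'$, $\ell(S)=\lim\sum\xi_n\ell(\iota e_n)$ is centered Gaussian with variance $\sum\ell(\iota e_n)^2=\|\iota^*\ell\|_H^2$, and finite families are handled jointly in the same way. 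So the finite-dimensional distributions coincide and $\mu$ is the countably additive extension. I expect this identification, together with the block-tail estimate, to be the main technical point. The subtlety is that Gross measurability controls only finite-rank projections, so each estimate must be applied to finite blocks before passing to limits.

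For the Cameron--Martin claim, the covariance of $\mu$ is $R\ell=\iota\iota^*\ell$, with $\|\ell\|_{L^2(\mu)}=\|\iota^*\ell\|_H$. Since $\iota$ has dense range, $\iota^*$ is injective. The Cameron--Martin space $H(\mu)$ is the set of $h\in F$ with $\sup\{\ell(h):\|\iota^*\ell\|_H\le1\}<\infty$. For $h=\iota x$ we get $\ell(h)=(x,\iota^*\ell)_H$, so $\iota(H)\subseteq H(\mu)$ with $|h|_{H(\mu)}\le\|x\|$. Equality of norms holds because $\iota^*F'$ is dense in $H$: its orthogonal complement is $\ker\iota=0$. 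Conversely, $H(\mu)$ is the image under $R$ of the closure $F'_\mu$ of $F'$ in $L^2(\mu)$. The map $\ell\mapsto\iota^*\ell$ extends to an isometry $F'_\mu\to H$ with dense range, hence onto, and $R$ factors as $\iota$ composed with this isometry. Thus $H(\mu)=\iota(H)$ isometrically.
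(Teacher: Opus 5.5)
Your proposal is correct and follows essentially the paper's Kallianpur-type argument: Gross witnesses for the quasinorm, an orthonormal basis adapted to them, block estimates giving convergence of $\sum\xi_n\,\iota e_n$, identification of the law through the Gaussian one-dimensional marginals, and the covariance isometry $\ell\mapsto\iota^*\ell$ for the Cameron--Martin space, which you spell out more fully than the paper's Step~4 does. The differences are cosmetic. The paper realizes the i.i.d.\ Gaussians as the coordinates $k^2(x,e_k)_E$ on an auxiliary Hilbert space $(E,\nu_1)$, a device it reuses for the transport map $L$, where you use an abstract probability space; it also proves density of $\bigcup_n P_n(H)$ separately, whereas you need only make your refined basis complete, e.g.\ by Gram--Schmidt on the ranges of the $P_j$ interleaved with your original $e_n$.
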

  \begin{Theorem}\label{due}
Let $\gamma$ be a centered Gaussian measure on a separable Fr\'echet space $F$ such that the Cameron-Martin space
$H=H(\gamma)$
is dense in $F$, and let $\iota\colon H\to F$ be the natural embedding. Then $(\iota,H,F)$ is an abstract Wiener space.
\end{Theorem}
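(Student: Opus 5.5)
We must show that for each defining seminorm $q_k$ of $F$, $q_k\circ\iota$ is Gross measurable on $H$; density of $\iota(H)$ is assumed, and continuity of $\iota$ is standard (the Cameron--Martin space embeds continuously). The plan follows Bogachev's Theorem 3.9.6. Fix an orthonormal basis $(e_n)$ of $H=H(\gamma)$ and let $\xi_n=\hat e_n$ be the corresponding measurable linear functionals. These form an i.i.d.\ standard Gaussian sequence in $L^2(\gamma)$, and every $\xi_n$ lies in the closure of $F'$ in $L^2(\gamma)$. The key input is the Fréchet version of the Itô--Nisio type statement:
$$S_N(x)=\sum_{n=1}^N \xi_n(x)\,e_n \longrightarrow x \quad\text{in } F,\ \gamma\text{-a.e.}$$
I would derive it from the martingale convergence $\mathbb E[x\mid \xi_1,\dots,\xi_N]=S_N(x)$, applied seminorm by seminorm. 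Here $q_k\in L^p(\gamma)$ by Fernique's theorem applied to the seminorm $q_k$, which is $\gamma$-measurable and finite. The alternative is to use directly the known result that for Radon Gaussian measures on Fréchet spaces the series converges a.s. Taking a countable family of seminorms gives a.s.\ convergence in the Fréchet topology.

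Given this, fix $k$ and $\eps>0$. Since $q_k(x-S_N(x))\to0$ a.s., the tail sums $T_{N,M}=\sum_{n=N+1}^M\xi_ne_n$ satisfy
$$\gamma\Big(\sup_{M>N} q_k(T_{N,M})>\eps\Big)<\eps$$
for $N$ large. Set $P_\eps$ to be the projection onto $\mathrm{span}(e_1,\dots,e_N)$.

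For an arbitrary finite-dimensional projection $P\perp P_\eps$ we need to estimate $\nu(h: q_k(Ph)>\eps)$, which equals the $\gamma$-probability that $q_k(\sum_{j}\eta_j u_j)>\eps$. Here $(u_j)$ is an orthonormal basis of $PH$ and $\eta_j$ are i.i.d.\ standard normals. The difficulty is that $PH$ need not be spanned by the $e_n$ with $n>N$. I would handle this as in Bogachev: approximate $u_j$ by vectors in $\mathrm{span}(e_{N+1},\dots,e_M)$, using continuity of $q_k\circ\iota$ on finite-dimensional spaces. Then use the contraction principle / Anderson's inequality. For a centered Gaussian vector $Y$ in a finite-dimensional space and a symmetric convex set $C$, replacing $Y$ by the orthogonal projection of a larger Gaussian vector onto a subspace (conditional expectation) only increases $\gamma(Y\in C)$. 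Concretely, $\sum_j\eta_ju_j$ has the law of $\mathbb E[T_{N,M}\mid \mathcal G]$ for a suitable sub-$\sigma$-algebra $\mathcal G$ generated by linear combinations of $\xi_{N+1},\dots,\xi_M$. Jensen applied to the convex function $\mathbf 1_{\{q_k>\eps\}}$ is not convex, so instead I would use Anderson's inequality. The law of $T_{N,M}$ equals the law of the projected vector plus an independent centered Gaussian, hence
$$P(q_k(\text{projected})>\eps)\le P(q_k(T_{N,M})>\eps)<\eps.$$

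The main obstacle is the first step: establishing a.s.\ convergence of $S_N$ in each $q_k$ without a Banach norm. This is where Fernique integrability and the martingale argument must be checked carefully for Fréchet-valued variables, using separability and the countable family $q_k$. The approximation of $PH$ by spans of $e_n$, $n>N$, also needs care. Exact orthogonality to $e_1,\dots,e_N$ ensures the $u_j$ lie in $\overline{\mathrm{span}}(e_n:n>N)$. The error in truncating at $M$ is controlled because $q_k\circ\iota$ is bounded by a multiple of the $H$-norm, by continuity of $\iota$. This yields the bound $<\eps$ after adjusting constants.
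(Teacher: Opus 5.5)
Your proposal is correct and follows essentially the same route as the paper: almost sure convergence of $\sum_n\xi_n(x)e_n$ to $x$ in $F$, then transfer of the resulting uniform tail bound from $\gamma$ to $\nu$, then Anderson's inequality applied to the independent splitting $Qh=QPh+Q(I-P)h$ with $Q=P_M-P_N$. For the convergence step the paper simply invokes Bogachev's Theorem~3.5.1/It\^o--Nisio, where you sketch a martingale--Fernique derivation that also works. The remaining differences are cosmetic. The paper passes from $QP$ to $P$ by Fatou's lemma, whereas you use the bound $q_k\circ\iota\le C\,|\cdot|_H$ with an $\varepsilon/2$ adjustment; both work. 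Your interim ``conditional expectation'' description is not literally accurate, since the law of $QPh$ is $\mathcal N(0,QPQ)$, which is not a projection covariance. The Anderson comparison you settle on is nonetheless exactly what is needed.
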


 \begin{proof} {\sc of Theorem \ref{uno}}

The finite-dimensional case ($\dim H<\infty$) is immediate: $F=\iota(H)$ up to the topology, and
$\nu$ is already countably additive. Henceforth assume $H$ is infinite-dimensional (so that,
e.g., $r_n\uparrow\infty$ in Step~2 below is meaningful).
 
 \noindent {Step 1.} Denote by $|\cdot|_F$ the quasinorm on $F$, Gross measurable by the computation
above. For each $j\in\N$, apply Gross measurability with $\varepsilon=2^{-j}$ to obtain a
finite-dimensional orthogonal projection $Q_j\in P(H)$ such that
$\nu(x: |Px|_F>2^{-j})<2^{-j}$ for every $P\in P(H)$ with $P\perp Q_j$. Set
$P_n:=Q_1\vee\cdots\vee Q_n$ (the orthogonal projection onto $\mathrm{Ran}(Q_1)+\cdots
+\mathrm{Ran}(Q_n)$): an increasing sequence of finite-dimensional projections with $P_n\to I$
and $P_n\ge Q_n$. Set $c_n:=2^{-n}$. For $m\ge n$, $P_m-P_n$ is an orthogonal projection with
$\mathrm{Ran}(P_m-P_n)\perp\mathrm{Ran}(P_n)\supseteq\mathrm{Ran}(Q_n)$, i.e.\ $P_m-P_n\perp Q_n$;
applying Gross measurability with $P=P_m-P_n$, $\varepsilon=c_n$ gives
\begin{equation}
\label{eq.incr}
 \nu\Big(x\colon |(P_m-P_n)x|_F>c_n\Big)<c_n,\quad\forall m\ge n.
\end{equation}

\begin{Remark}[Correcting Bogachev's relation (3.9.2)]\label{rem.pmpn}
As printed in Bogachev's proof of Theorem~3.9.5, and in an earlier draft of this Step matching
it verbatim, the analogous relation -- there labelled $(3.9.2)$ -- reads
$\nu(x: |P_mx|_F>c_n)<c_n$, with $P_m$ in place of $P_m-P_n$. This cannot be correct: since
$P_n\to I$ strongly and $|\cdot|_F$ is continuous (Lemma above), $P_mx\to x$ and hence
$|P_mx|_F\to|x|_F$ as $m\to\infty$, which is not small for typical $x$. What Step~3 below
actually needs is control of the increment $S_m-S_n$, governed by $|(P_m-P_n)x|_F$, not by
$|P_mx|_F$ alone; \eqref{eq.incr} is exactly what Gross measurability delivers once applied,
correctly, with $P=P_m-P_n\perp Q_n$ as checked above. (The freedom to enlarge a Gross witness,
used to define $P_n$, is immediate from the definition: if $P_\varepsilon$ works, so does any
finite-rank $P_\varepsilon'\ge P_\varepsilon$, since
$\{P:P\perp P_\varepsilon'\}\subseteq\{P:P\perp P_\varepsilon\}$.)
\end{Remark}

{Step 2.} \emph{$\bigcup_nP_n(H)$ is dense in $H$.} If $h\ne0$ and $h\perp P_n(H)$ for every
$n$, then in particular $h\perp Q_j(H)$ for every $j$ (since $Q_j(H)\subset P_n(H)$ for $n\ge
j$), so the rank-one projection $P_h$ onto $\R h$ satisfies $P_h\perp Q_j$ for every $j$; since
$Q_j$ witnesses Gross measurability of the quasinorm $|\cdot|_F$ itself at threshold $2^{-j}$
(not, a priori, of any single seminorm $q_k$ -- this is the witness the construction actually
supplies, and is what must be used here), this gives directly
$$
\nu\bigl(x: |P_hx|_F>2^{-j}\bigr) < 2^{-j}, \qquad \forall j\in\N.
$$
Write $P_hx = Z\,\widehat h$ with $Z:=(x,h)_H/|h|_H\sim\mathcal N(0,1)$ under $\nu$ and
$\widehat h:=h/|h|_H$ a fixed unit vector; set $c:=|\iota(\widehat h)|_F>0$ (positive since
$\iota$ is injective and $\widehat h\ne0$). Each term $q_k(t\widehat h)/(1+q_k(t\widehat h))$ of
the quasinorm is non-decreasing in $|t|$ (as $q_k(t\widehat h)=|t|\,q_k(\widehat h)$ by
homogeneity of the seminorm $q_k$, and $s\mapsto s/(1+s)$ is increasing), so $t\mapsto
|t\widehat h|_F$ is non-decreasing in $|t|$; in particular $|t\widehat h|_F\ge
|\widehat h|_F=c$ for every $|t|\ge1$. Hence, for every $j$,
$$
\nu\bigl(x: |P_hx|_F>2^{-j}\bigr) \ge \nu\bigl(|Z|\ge1,\ |Z\widehat h|_F\ge c\bigr)
\ge \Pr(|Z|\ge1) \quad \text{once } 2^{-j}<c,
$$
a fixed positive constant independent of $j$. This contradicts
$\nu(|P_hx|_F>2^{-j})<2^{-j}\to0$ for $j$ large enough that $2^{-j}<\min(c,\Pr(|Z|\ge1))$. Hence
no such $h\ne0$ exists: $\bigcup_nP_n(H)$ is dense in $H$.

\emph{Choice of basis.} Set $r_0:=0$, $r_n:=\dim P_n(H)$ (finite, and $r_n\uparrow\infty$ since
$\bigcup_nP_n(H)$ is dense and infinite-dimensional, $H$ being infinite-dimensional). Choose,
inductively, an orthonormal basis $e_{r_{n-1}+1},\ldots,e_{r_n}$ of $P_n(H)\ominus P_{n-1}(H)$ for
each $n\ge1$; this produces a complete orthonormal basis $(e_k)_{k\ge1}$ of $H$ with
$P_n(H)=\mathrm{span}\{e_1,\ldots,e_{r_n}\}$ exactly, i.e.\ $P_n=\widetilde P_{r_n}$ where
$\widetilde P_m$ denotes the orthogonal projection onto $\mathrm{span}\{e_1,\ldots,e_m\}$. With
this basis, \eqref{eq.incr} reads $\nu(x:|(\widetilde P_{r_m}-\widetilde P_{r_n})x|_F>c_n)<c_n$
for $m\ge n$, giving directly the Cauchy property, in $\nu_1$-measure, of the subsequence
$(S_{r_n})_n$ (Step~3 below). To pass from this subsequence to the full sequence $(S_k)_k$: for
$r_n<k\le r_{n+1}$, $\widetilde P_k-\widetilde P_{r_n}$ is the orthogonal projection onto
$\mathrm{span}\{e_{r_n+1},\ldots,e_k\}\subset P_{n+1}(H)\ominus P_n(H)$, hence
$\mathrm{Ran}(\widetilde P_k-\widetilde P_{r_n})\perp P_n(H)\supseteq Q_n(H)$, and Gross
measurability (witness $Q_n$, threshold $c_n$) gives
$\nu(x:|(\widetilde P_k-\widetilde P_{r_n})x|_F>c_n)<c_n$ for every such $k$; combined with the
subsequential estimate this controls every increment $S_k-S_{r_n}$, $r_n\le k\le r_{n+1}$,
uniformly in $k$, and hence the full sequence $(S_k)_k$ is Cauchy in $\nu_1$-measure.
Consider the Hilbert space $E$, obtained as the completion of $H$ with respect to the
norm
$$
x \mapsto \Big\{ \sum_{n=1}^\infty \frac{(x,e_n)_H^2}{n^{2}}\Big\}^{1/2}.
$$
Here $E$ is understood as the completion, with respect to this norm, of the (dense) subspace
$\tilde H\subset H$ of finite linear combinations of the $e_n$'s.
Let $j_E$ denote the natural embedding $H\to E$. Applying Theorem 2.3.1 (of Bogachev \cite{BG}), we conclude that
the pushforward $\nu_1 = \nu\circ j_E^{- 1}$ is countably additive gaussian measure on the auxiliary Hilbert space $E$,
and one has $H = H(\nu_1)$.\\[6pt]
 \noindent {Step 3.} Consider the sequence of $F$-valued random vectors on $(E, \nu_1)$ defined by
$$
S_n(x) = \sum_{k=1}^n k^2\, (x,e_k)_E\,\iota(e_k)
$$
(the exponent $2$, rather than $1$, is required for consistency with Step~4 below; see Remark~\ref{rem.exponent}.
We also make explicit here, as $\iota(e_k)\in F$, the embedding of the basis vectors $e_k\in H$ into $F$.)
%\red
This is precisely the point where Kallianpur's approach \cite{K} enters: rather than extending the measure
$\nu$ via Carath\'eodory's theorem as in Gross's original argument, one realises the limiting object directly
as an a.s.\ limit of the partial sums $S_n$, in the spirit of the Kolmogorov extension theorem for the sequence
of Gaussian random variables $(\xi_k)_k$.
%\nor
Relationship \eqref{eq.incr} implies that 
$\{S_n\}$ is a Cauchy sequence  in $\nu_1$-measure. Since $F$ is a complete metric space, the sequence converges in measure to a random vector $S\colon E\to F$ (the argument is the same as for real-valued random variables; see Problem 3.11.38 in Bogachev \cite{BG}).
%\red
More precisely, fix a countable family of seminorms $(q_k)_k$ generating the topology of $F$, and let
$d(y,z)=\sum_k 2^{-k} \frac{q_k(y-z)}{1+q_k(y-z)}$ be the associated complete translation-invariant metric on $F$.
Cauchy in $\nu_1$-measure with respect to $d$ means that for every $\eta>0$,
$\nu_1(x: d(S_n(x),S_m(x))>\eta)\to 0$ as $n,m\to\infty$. As in the real-valued case, one extracts a subsequence
$(S_{n_j})$ that is $\nu_1$-almost surely Cauchy in the complete metric space $(F,d)$; completeness of $F$ then
yields an $F$-valued limit $S(x) = \lim_j S_{n_j}(x)$ for $\nu_1$-almost every $x$. In particular $S$ is a genuine
$F$-valued (Borel) random vector, not merely a formal limit. Since every subsequence of $(S_n)$ is again
Cauchy in $\nu_1$-measure, the same argument shows that every subsequence of $(S_n)$ has in turn a further
subsequence converging $\nu_1$-a.s.\ to $S$; this is enough to conclude that the full sequence $(S_n)$
converges to $S$ in $\nu_1$-measure (a sequence converging in measure to two different limits along
subsequences cannot occur, since a.s.\ convergent subsequences to different limits would contradict the
Cauchy property in measure of the original sequence).\\[6pt]
%\nor
 \noindent {Step 4.} It remains to verify that the distribution $\nu_2:= S^{-1}\nu_1$ of $S$
 coincides with the cylindrical measure $\nu\circ \iota^{-1}$ on $F$. Let $f \in F^*$; by
 the Riesz representation theorem, there exists a unique $v \in H$ such that $f(h) =(v,h)_H$ for all $h\in H$.
 The distribution $\nu_2\circ f^{-1}$ is the law of $f(S)$ under $\nu_1$. Since $S_n\to S$ in $\nu_1$-measure,
 we have 
$$
f(S) = \sum_{k=1}^\infty k^2\,(x,e_k)_E\, f(\iota(e_k)) = \sum_{k=1}^\infty k^2\,(x,e_k)_E\, (v,e_k)_H
$$
The functionals $\xi_k(x) = k^2\, (x,e_k)_E$ are independent standard Gaussian
random variables on $(E, \nu_1)$, since their restrictions to $H = H(\nu_1)$ are
given by the vectors $e_k$ of an orthonormal basis: indeed, from the norm defining
$E$ in Step~2, $(x,e_k)_E = k^{-2}(x,e_k)_H$ for $x\in H$, so that
$k^2(x,e_k)_E = (x,e_k)_H$ on $H$.
Hence, the variance of $f (S)$ equals
$|v|_H^2 = \sum_{k=1}^\infty(v,e_k)_H^2$.

On the other hand,  $\nu\circ \iota^{-1} \circ f^{-1}$ is a centred
Gaussian measure with the same variance $\sigma^2 = |v|_H^2$, since $f\circ \iota$ 
is given on $H$ by the vector $v$. The two distributions therefore coincide. An identical argument
applied to any $g \in F^*$ shows that $(g, f )_{L^2(\nu_2)}=(u, v)_H$ where
$u \in H$ corresponds to $g$ which yields that $H = H(\nu_2)$.

This completes the proof of Theorem 1. 
\end{proof}

\medskip
The proof above does more than establish the countable additivity of $\nu$ on $F$:
it produces, as a by-product of Steps~3--4, an explicit measurable linear
identification between the auxiliary Hilbert space $E$ and $F$. We record this
separately, since it is the fact used systematically in the companion paper on
nonlinear transformations of Gaussian measures on Fr\'echet spaces to transport
regularity classes and the Fredholm--Carleman density from $E$ to $F$.

\begin{Lemma}[Density of $R(F^*)$ in $H$]\label{lem.dense}
For $f\in F^*$ let $v_f\in H$ be the unique vector with $f(\iota h)=(v_f,h)_H$ for all $h\in H$
(Riesz representation of $f\circ\iota\in H^*$; unique because $\iota$ has dense range), and set
$R(F^*):=\{v_f: f\in F^*\}\subset H$. Then $R(F^*)$ is dense in $H$.
\end{Lemma}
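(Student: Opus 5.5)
Since $R(F^*)$ is a linear subspace of $H$, I will show that its orthogonal complement is trivial. So fix $h\in H$ with $(v_f,h)_H=0$ for every $f\in F^*$. By the definition of $v_f$ this says $f(\iota h)=(v_f,h)_H=0$ for all $f\in F^*$.

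The conclusion then comes from the Hahn--Banach theorem on the locally convex space $F$. A separable Fr\'echet space is locally convex and Hausdorff, so $F^*$ separates points. Concretely, if $\iota h\neq0$, some defining seminorm satisfies $q_k(\iota h)>0$. The functional $t\,\iota h\mapsto t\,q_k(\iota h)$ on $\R\,\iota h$ is dominated by $q_k$, so it extends to a linear functional $f$ on $F$ with $|f|\le q_k$. Such an $f$ is continuous, and $f(\iota h)\neq0$. This contradicts the previous paragraph, so $\iota h=0$. Since $\iota$ is an injective embedding, $h=0$. Hence $R(F^*)^\perp=\{0\}$, and $R(F^*)$ is dense in $H$.

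I should also check that the objects in the statement are well defined, since that is implicitly claimed. The map $f\circ\iota$ is a continuous linear functional on $H$, being a composition of continuous linear maps, so the Riesz theorem gives a unique $v_f$. The map $f\mapsto v_f$ is linear, so $R(F^*)$ is a subspace and "dense" is equivalent to "trivial orthogonal complement".

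There is no real obstacle here. The only step needing care is making sure that Hahn--Banach separation uses the Fr\'echet (locally convex, Hausdorff) structure. This is exactly where the countable family of seminorms $q_k$ from Definition~\ref{d.one} enters. The Gross measurability of the $q_k$ plays no role. Injectivity of $\iota$ is essential, and it is part of the word ``embedding'' in Definition~\ref{d.one}. An alternative route via Step~4 of Theorem~\ref{uno} would identify $v_f$ with the Cameron--Martin vector of $f$ in $L^2(\nu_2)$ and use density of $F^*$ in the reproducing kernel space. That route is unnecessary given the direct argument.
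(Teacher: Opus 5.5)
Your proof is correct and is essentially the paper's own argument. A vector $h\perp R(F^*)$ satisfies $f(\iota h)=0$ for all $f\in F^*$; Hahn--Banach separation in the locally convex space $F$ then forces $\iota h=0$, and injectivity of $\iota$ gives $h=0$. Your explicit construction of a separating functional dominated by a seminorm $q_k$ simply spells out the Hahn--Banach step that the paper cites.
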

\begin{proof}
If $w\in H$ is orthogonal to $R(F^*)$, then $f(\iota w)=(v_f,w)_H=0$ for every $f\in F^*$. Since
$F$ is locally convex, $F^*$ separates points of $F$ (Hahn--Banach), so $\iota w=0$; as $\iota$
is injective, $w=0$.
\end{proof}

\begin{Corollary}[Transport map]\label{cor.transport}
Write $\gamma:=\nu\circ\iota^{-1}$, the measure on $F$ furnished by Theorem~\ref{uno}. There
exist a separable Hilbert space $E$, a countably additive Gaussian measure $\nu_1$ on $E$ with
$H(\nu_1)=H$, a continuous linear embedding $j_E\colon H\to E$ with dense range, and a
$\nu_1$-a.e.\ defined, measurable, linear map $L\colon E\to F$, injective at every point of its
domain, such that
$$
\gamma = \nu_1\circ L^{-1}, \qquad L\circ j_E = \iota \ \ \text{on } H.
$$
Equivalently, $L$ is a measure isomorphism between $(E,\nu_1)$ and $(F,\gamma)$ that restricts,
under the two embeddings $j_E$ and $\iota$, to the identity map of $H$.
\end{Corollary}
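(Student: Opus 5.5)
The plan is to take $E$, $\nu_1$ and $j_E$ exactly as constructed in Step~2 of the proof of Theorem~\ref{uno}, so that $H(\nu_1)=H$ and $j_E$ is continuous, linear and dense. Let $(S_{n_i})_i$ be the subsequence extracted in Step~3, which converges $\nu_1$-a.s. I will define $L$ as a pointwise limit on a carefully chosen linear subspace $E_0\subset E$ of full measure. The first candidate is
$$
D:=\bigl\{x\in E:\ (S_{n_i}(x))_i \text{ is Cauchy in }(F,d)\bigr\}.
$$
Each $S_n$ is linear and continuous, and $d$ is translation invariant, so $D$ is a linear subspace. It is Borel, because the Cauchy condition is a countable intersection and union of open and closed sets. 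By Step~3 it has full $\nu_1$-measure. For $x\in D$ set $L(x):=\lim_i S_{n_i}(x)$; this is linear and Borel measurable. Step~4 already identifies $\nu_1\circ L^{-1}$ with $\gamma=\nu\circ\iota^{-1}$.

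The identity $L\circ j_E=\iota$ is direct. For $h\in H$, Step~4 gives $k^2(j_Eh,e_k)_E=(h,e_k)_H$, hence $S_n(j_Eh)=\iota(\widetilde P_n h)$. Since $\widetilde P_n h\to h$ in $H$ and $\iota$ is continuous, we get $S_n(j_Eh)\to\iota h$ for the full sequence. Therefore $j_E(H)\subset D$ and $L(j_Eh)=\iota h$.

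The main obstacle is injectivity \emph{at every point} of the domain. The kernel of $L$ on $D$ is $\nu_1$-null, because $\gamma(\{0\})=0$ when $\dim H=\infty$, but a null kernel does not make it trivial, so I will shrink $D$. The idea is to recover the coordinates $\xi_k(x)=k^2(x,e_k)_E$ measurably and linearly from $L(x)$. By Lemma~\ref{lem.dense}, for each $k$ I choose functionals $f_{k,m}\in F^*$ with $\|v_{f_{k,m}}-e_k\|_H\le 2^{-m}$. By Step~4, $f_{k,m}\circ L=\sum_i\xi_i\,(v_{f_{k,m}},e_i)_H$ $\nu_1$-a.e. Hence $f_{k,m}\circ L-\xi_k$ is a centred Gaussian variable with variance $\|v_{f_{k,m}}-e_k\|_H^2\le4^{-m}$. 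By Borel--Cantelli, $f_{k,m}(Lx)\to\xi_k(x)$ for $\nu_1$-a.e.\ $x$. Now set
$$
E_0:=\bigl\{x\in D:\ f_{k,m}(Lx)\to\xi_k(x)\ \text{as } m\to\infty,\ \forall k\bigr\}.
$$
This is again a Borel linear subspace of full measure, since all the conditions are linear, countably many, and each holds a.e. It still contains $j_E(H)$, because $f_{k,m}(\iota h)=(v_{f_{k,m}},h)_H\to(e_k,h)_H=\xi_k(j_Eh)$. On $E_0$, $Lx=0$ forces $\xi_k(x)=0$ for every $k$. The vectors $k^2e_k$ form an orthonormal basis of $E$ (from the norm defining $E$), so $x=0$ and $L|_{E_0}$ is injective.

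For the isomorphism statement, set $F_0:=L(E_0)$. This is the injective Borel image of a Borel subset of the Polish space $E$ into the Polish space $F$, so by the Lusin--Souslin theorem $F_0$ is Borel and $L^{-1}\colon F_0\to E_0$ is Borel. Concretely, $L^{-1}(y)$ has coordinates $\lim_m f_{k,m}(y)$. Finally, $\gamma(F_0)=\nu_1(L^{-1}(F_0))\ge\nu_1(E_0)=1$, so $F_0$ is conull, and $L\colon E_0\to F_0$ is a linear Borel isomorphism between conull sets carrying $\nu_1$ to $\gamma$ and intertwining $j_E$ with $\iota$.
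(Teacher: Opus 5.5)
Your proof is correct, but it reaches pointwise injectivity by a different mechanism than the paper.

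\emph{The paper's route.} The paper does not reuse the basis of Theorem~\ref{uno}. It re-chooses the orthonormal basis inside $R(F^*)$ by Gram--Schmidt, $e_n=v_{f_n}$, and rebuilds $E,\nu_1,j_E$ from it. It then gets $\nu_1$-a.e.\ convergence of the full sequence $(S_n)$ by applying Bogachev's Theorem~3.5.1 to the already countably additive $\gamma$. This is transferred to $E$ through the common product law $\rho$ of $(f_k)$ under $\gamma$ and of $(\eta_k)$ under $\nu_1$. Injectivity on the natural convergence set is then immediate, since $f_j(S_n(z))=\eta_j(z)$ exactly for $n\ge j$.

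\emph{Your route.} You keep the Gross-adapted basis and the a.s.\ convergent subsequence of Step~3. To compensate for $e_k\notin R(F^*)$, you approximate $e_k$ by $v_{f_{k,m}}$ at a summable rate, so Borel--Cantelli gives $\xi_k=\lim_m f_{k,m}\circ L$ a.e. You then cut the domain down to the linear Borel set where this recovery formula holds.

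\emph{What your route buys.} It is self-contained given Theorem~\ref{uno} and Lemma~\ref{lem.dense}. It needs no a.s.\ convergence theorem for Gaussian series in Fr\'echet spaces; the paper's appeal to Theorem~3.5.1, which Bogachev proves for Banach spaces, rests only on the It\^o--Nisio sketch of Remark~\ref{rem.itonisio}. It also shows that the map $S$ of Theorem~\ref{uno} itself becomes injective once restricted to a suitable conull linear subspace. So the claim in Remark~\ref{rem.diagram}, that pointwise injectivity \emph{requires} a basis inside $R(F^*)$ together with Theorem~3.5.1, is too strong.

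\emph{What the paper's route buys.} It gives a canonical domain, namely the convergence set of $(S_n)$ itself. It also gives the exact identities $f_k\circ L=\eta_k$ with single continuous functionals $f_k\in F^*$. Lemma~\ref{lem.intertwine} relies on these to match cylindrical functions on $F$ with cylindrical functions on $E$. With your $L$ you only have $\xi_k=\lim_m f_{k,m}\circ L$, so that lemma would need to be reworked.

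One small slip: since $\|e_k\|_E=1/k$, the orthonormal basis of $E$ is $(ke_k)$, not $(k^2e_k)$. Your argument only uses that the $e_k$ are orthogonal with dense span in $E$, so nothing changes.
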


\begin{proof}
\emph{Choice of basis.} By Lemma~\ref{lem.dense}, $R(F^*)$ is dense in $H$; applying
Gram--Schmidt to a countable dense subset of $R(F^*)$ (Gram--Schmidt forms only finite linear
combinations, which stay inside the linear space $R(F^*)$) produces a complete orthonormal basis
$(e_n)_{n\ge1}$ of $H$ with $e_n=v_{f_n}$ for suitable $f_n\in F^*$. Since $(f,g)_{L^2(\gamma)}=
(v_f,v_g)_H$ for $f,g\in F^*$ (the covariance-isometry identity underlying $\gamma=\nu\circ
\iota^{-1}$, and used again in the proof of Theorem~\ref{due} below), $(f_n)$ is orthonormal in
$L^2(\gamma)$, and complete there because $(e_n)$ is complete in $H=H(\gamma)$.

\emph{Construction of $E,\nu_1,j_E,L$.} Build $E$, $\nu_1$, $j_E$ exactly as in Step~2 of the
proof of Theorem~\ref{uno}, but with this basis $(e_n)$ in place of the one produced there (the
two constructions need not coincide): $E$ is the completion of the finite linear span of the
$e_n$ with respect to $x\mapsto\bigl\{\sum_n(x,e_n)_H^2/n^{2}\bigr\}^{1/2}$, and $\nu_1=\nu\circ j_E^{-1}$
is, as in Step~2, countably additive on $E$ with $H(\nu_1)=H$. Set $\eta_k(x):=k^2(x,e_k)_E$ and
$$
S_n(x):=\sum_{k=1}^n\eta_k(x)\,\iota(e_k), \qquad x\in E.
$$

\emph{Convergence.} $\gamma$ is now known (Theorem~\ref{uno}) to be a countably additive Radon
Gaussian measure on $F$ with $H(\gamma)=H$, and $(f_n)$ is a complete orthonormal basis of
$F^*_\gamma$; Bogachev's Theorem~3.5.1 therefore gives $\widetilde P_nx\to x$ in $F$ for
$\gamma$-a.e.\ $x\in F$, where $\widetilde P_nx:=\sum_{i=1}^nf_i(x)\iota(e_i)$. Let
$$
C:=\Big\{(a_k)_{k\ge1}\in\R^{\N}: \textstyle\sum_{k=1}^na_k\,\iota(e_k) \text{ converges in } F
\text{ as } n\to\infty\Big\},
$$
a Borel subset of $\R^{\N}$ (with the product topology), determined solely by the fixed vectors
$\iota(e_k)\in F$ and not by any particular measure. Under $\gamma$, the sequence
$(f_k(x))_{k\ge1}$ has law $\rho:=$ the standard Gaussian product measure on $\R^\N$, since
$(f_k)$ is a complete orthonormal basis of $F^*_\gamma$ (jointly Gaussian, pairwise uncorrelated
hence independent, each $N(0,1)$ since $\|f_k\|_{L^2(\gamma)}=1$); Theorem~3.5.1 says precisely
that $(f_k(x))_k\in C$ for $\gamma$-a.e.\ $x$, i.e.\ $\rho(C)=1$. Under $\nu_1$, the sequence
$(\eta_k(z))_{k\ge1}$ \emph{also} has law $\rho$ -- this is exactly the computation in the
paragraph below, showing the $\eta_k$ are i.i.d.\ standard Gaussian on $(E,\nu_1)$, and does not
presuppose convergence of $(S_n)$. Hence
$$
\nu_1\big(z\in E:\ (\eta_k(z))_k\in C\big) = \rho(C) = 1,
$$
i.e.\ $S_n(z)=\sum_{k=1}^n\eta_k(z)\iota(e_k)$ converges in $F$ for $\nu_1$-a.e.\ $z\in E$. (No
statement about $\gamma$-a.e.\ convergence is ``restricted to $H$'': $H$ has $\gamma$-measure
zero in infinite dimensions, exactly like $j_E(H)\subset E$ has $\nu_1$-measure zero, and $\nu$
is only a finitely additive cylindrical measure on $H$ itself; the argument above transfers the
convergence statement between $F$ and $E$ solely through the \emph{common law} $\rho$ of the two
coordinate sequences on $\R^\N$, never through $H$.) Let $E_0\subset E$ (of full $\nu_1$-measure)
be this convergence set; $E_0=\{z\in E: (S_n(z))_n \text{ converges}\}$ is, term by term, closed
under sums and scalar multiples of convergent sequences, hence a genuine linear subspace of $E$.
Moreover $j_E(H)\subset E_0$: for $h\in H$, $S_n(j_E(h))\to\iota(h)$ for \emph{every} (not merely
$\nu_1$-a.e.) $h$, by the identity $k^2(h,e_k)_E=(h,e_k)_H$ established in Step~4 of the proof of
Theorem~\ref{uno} and continuity of $\iota$ (this is proved again, independently, below). Since
$E_0$ is linear and contains $j_E(H)$, $E_0+j_E(H)=E_0$: $E_0$ is stable under Cameron--Martin
translations. Set $L(x):=\lim_nS_n(x)$ for $x\in E_0$; $L$ is linear on $E_0$, each $S_n$ being
linear.

\emph{$\nu_1\circ L^{-1}=\gamma$.} Identical to Step~4 of the proof of Theorem~\ref{uno}: for
$f\in F^*$ with Riesz representative $v\in H$, $f(L(x))=\sum_k\eta_k(x)(v,e_k)_H$, the
functionals $\eta_k$ are i.i.d.\ standard Gaussian on $(E,\nu_1)$, and the resulting variance
computation shows $f_*(\nu_1\circ L^{-1})=f_*\gamma$ for every $f\in F^*$. Since $F^*$ separates
points of the separable Fr\'echet space $F$ and generates its Borel $\sigma$-algebra,
$\nu_1\circ L^{-1}=\gamma$.

\emph{Injectivity.} Let $N:=\{z\in E: S_n(z)\to0 \text{ in } F\}$, a linear subspace of $E$
(kernel of the sequence of linear maps $S_n$: if $S_n(z)\to0$ and $S_n(z')\to0$ then
$S_n(z+z')=S_n(z)+S_n(z')\to0$, and likewise for scalar multiples). Fix $z\in N$ and $j\in\N$.
Since $v_{f_j}=e_j$,
$$
f_j(S_n(z)) = \sum_{k=1}^n\eta_k(z)\,(e_j,e_k)_H = \eta_j(z), \qquad n\ge j,
$$
a finite sum, constant once $n\ge j$: this is simply the continuous functional $f_j$ applied to
the terms of the (literally, pointwise) convergent sequence $S_n(z)\to0$, with no
measure-theoretic content. Hence $\eta_j(z)=\lim_nf_j(S_n(z))=f_j(0)=0$. As $j$ was arbitrary,
$\eta_j(z)=0$ for every $j$, i.e.\ $(z,ke_k)_E=\eta_k(z)/k=0$ for every $k$; since $(ke_k)_k$ is
a complete orthonormal basis of $E$ (Step~2 of Theorem~\ref{uno}'s proof, applied to this
basis), $z=0$. Thus $N=\{0\}$.

If $x,y\in E_0$ and $L(x)=L(y)$, set $z:=x-y$; then $S_n(z)=S_n(x)-S_n(y)\to L(x)-L(y)=0$ in
$F$, so $z\in N=\{0\}$ and $x=y$. Hence $L$ is injective at every point of $E_0$.

\emph{$L\circ j_E=\iota$ on $H$.} For $h\in H$,
$$
S_n(j_E(h)) = \sum_{k=1}^n k^2(h,e_k)_E\,\iota(e_k) = \sum_{k=1}^n (h,e_k)_H\,\iota(e_k)
\xrightarrow[n\to\infty]{} \iota(h) \quad\text{in } F,
$$
using $k^2(h,e_k)_E=(h,e_k)_H$, continuity of $\iota$, and $\sum_{k=1}^n(h,e_k)_He_k\to h$ in
$H$; hence $L(j_E(h))=\iota(h)$ for every $h\in H$.

\emph{Conclusion.} $E_0$ is Borel in the Polish space $E$ (a countable intersection of Borel
conditions on the continuous maps $S_n$, expressed via the complete translation-invariant metric
of $F$), and $L\colon E_0\to F$ is Borel measurable (a pointwise limit of continuous maps) and
injective. By the Lusin--Souslin theorem \cite{Ke}, $L(E_0)$ is Borel in $F$ and
$L\colon E_0\to L(E_0)$ is a Borel isomorphism; since $\nu_1\circ L^{-1}=\gamma$ and
$\nu_1(E_0)=1$, also $\gamma(L(E_0))=1$. This is precisely the asserted measure isomorphism.
\end{proof}

\begin{Remark}\label{rem.exponent}
The exponent $2$ in Steps~3--4 corrects the corresponding step in the proof of
Theorem 3.9.5 in Bogachev \cite{BG} (pp.~137--138), where the analogous
functionals are written with a single power of $k$. With that convention one
finds, from the norm defining $E$, that $k(x,e_k)_E = k^{-1}(x,e_k)_H$ on $H$,
so that the resulting $\xi_k$ would have variance $k^{-2}$ rather than $1$,
and the variance computed for $f(S)$ at the end of Step~4 would be
$\sum_k k^{-2}(v,e_k)_H^2$ instead of $|v|_H^2$ --- inconsistent with the stated
conclusion $H=H(\nu_2)$. This is consistent with the classical construction of
Kallianpur \cite{K} (as reported in Kuo \cite{Ku}), where the analogous
functionals $n(e_j)$ enter the partial sums with no rescaling at all, matching
$\xi_k=k^2(x,e_k)_E=(x,e_k)_H$ above rather than $k(x,e_k)_E$.
\end{Remark}

\begin{Remark}[The transport diagram]\label{rem.diagram}
The construction is summarised by the diagram
\begin{center}
\begin{tikzpicture}[node distance=2.3cm, auto, every node/.style={font=\small}]
\node (H) {$H$};
\node (F) [right=of H] {$F$ \ (Fr\'echet, $\nu\circ\iota^{-1}$)};
\node (E) [below=of H] {$E$ \ (Hilbert, $\nu_1$)};
\draw[-{Latex}] (H) -- node {$\iota$} (F);
\draw[-{Latex}] (H) -- node[left] {$j_E$} (E);
\draw[-{Latex}, dashed] (E) -- node[below] {$L$} (F);
\end{tikzpicture}
\end{center}
Here $\iota$ and $j_E$ are the two embeddings of $H$ furnished by
Definition~\ref{d.one} and by the construction of $E$ in the proof of
Corollary~\ref{cor.transport}; $L$ is the injective, measurable linear map of that Corollary
closing the triangle, $L\circ j_E=\iota$. Unlike the map $S$ produced along the way in
Theorem~\ref{uno} (which need not be injective, since its basis need not lie in $R(F^*)$),
producing an $L$ with the additional, genuine (pointwise, not merely mod null sets) injectivity
needed here requires the separate argument of Corollary~\ref{cor.transport}: choosing the basis
inside $R(F^*)$, and invoking Bogachev's Theorem~3.5.1 -- available only once $\gamma$ is
already known to be countably additive, hence only \emph{after}, and not during, the proof of
Theorem~\ref{uno} -- in place of the estimate \eqref{eq.incr}.
\end{Remark}

\begin{Remark}[Domain of $L$]\label{rem.domain}
$L$ is defined only on the $\nu_1$-full-measure set $E_0\subset E$ where the sequence $(S_n)$ of
Corollary~\ref{cor.transport} converges; we extend it arbitrarily (say, by $0$) outside $E_0$.
On $E_0$ itself, however, $L$ is injective at \emph{every} point, not merely up to a further
null set (Corollary~\ref{cor.transport}, \emph{Injectivity}); this is what the Lusin--Souslin
argument there upgrades to a genuine Borel isomorphism $E_0\to L(E_0)$, with $L(E_0)$ of full
$\gamma$-measure. All statements below involving $L^{-1}$, and the maps
$T_n=L\circ\widehat T_n\circ L^{-1}$ of Section~4 in particular, are understood as defined on
$E_0$, respectively $L(E_0)$, and the resulting bijections and measure identities hold
everywhere on these full-measure sets -- not merely up to further null-set adjustments.
\end{Remark}

\begin{proof}{\sc of Theorem \ref{due}}
Let $\{\xi_n\}$ be an orthonormal basis $F^*_\gamma := \overline{F^*}^{\,L^2(\gamma)}$, the closure of $F^*$ in $L^2(\gamma)$,
with $\xi_n \in F^*$. Set $e_i = R_\gamma(\xi_i)$ (the image of $\xi_i$ under the
covariance operator) and define
\[
  P_n x = \sum_{i=1}^n \xi_i(x)\, e_i.
\]
Since $\{\xi_i\}$ is orthonormal in $L^2(\gamma)$ and $R_\gamma$ is, by
definition of the covariance operator, an isometry from $F^*_\gamma$ onto $H$,
the vectors $e_i=R_\gamma(\xi_i)$ are orthonormal in $H$; hence $P_n$ is the
orthogonal projection of $H$ onto $\mathrm{span}\{e_1,\dots,e_n\}$.
Since the variance of every $f \in F^*$ with respect to $\gamma$ equals $\|f|_H\|_H^2$,
the measure $\gamma$ coincides with the image of the standard cylindrical Gaussian measure
$\nu$ on $H$ under the natural embedding $\iota\colon H \to F$:
\[
  \gamma = \nu \circ \iota^{-1}.
\]

\medskip
\noindent\textbf{Step 1: Gross measurability of a seminorm.}
We verify the Gross measurability of an arbitrary defining seminorm $q$ of $F$.
For any $\varepsilon > 0$, we claim that
\[
  \lim_{n\to\infty} \gamma\bigl(x \in F : q(x - P_n x) > \varepsilon\bigr) = 0.
\]
Indeed, by Bogachev's Theorem~3.5.1, $P_n x \to x$ in $F$ for $\gamma$-almost every $x$.

\begin{Remark}\label{rem.itonisio}
Bogachev's Theorem~3.5.1 is stated in \cite{BG} for Banach spaces, but it is in
substance a special case of the It\^o--Nisio theorem for independent, symmetric,
random elements: $(P_nx)_{n\ge1}$ is the sequence of partial sums of the independent
symmetric $H$-valued (hence $F$-valued, via $\iota$) summands $(P_n-P_{n-1})x$, and
a.s.\ convergence is equivalent, by that theorem, to convergence in law of the
partial sums together with tightness of the limit -- both automatic here since
$\gamma$ is already known to be a Radon Gaussian measure on $F$. The classical proof
of It\^o--Nisio (via the L\'evy--Ottaviani symmetrization inequality and Prokhorov's
tightness criterion) uses only the translation-invariant metric of $F$ and the fact
that $F$ is Polish, never the homogeneity of a Banach norm; it therefore applies
verbatim to any separable Fr\'echet space. This extension is implicit already in the
literature on Gaussian measures over general locally convex spaces cited in the
Introduction, in particular Rajput~\cite{R} and Badrikian--Chevet~\cite{BC}, both of
whom work with orthogonal-series representations of Gaussian measures without any
Banach-space hypothesis.
\end{Remark}
By the dominated convergence theorem (applied to the indicator functions, which are
bounded by~$1$), the probability above tends to zero. In particular, there exists $N$
such that
\[
  \gamma\bigl(x \in F : q(P_n x - P_m x) > \varepsilon\bigr) < \varepsilon
  \quad \text{for all } n, m \ge N.
\]
Since $\gamma = \nu \circ \iota^{-1}$ and the projections $P_n$ take values in $H$, the
same inequality holds with $\gamma$ replaced by $\nu$:
\[
  \nu\bigl(x \in H : q(P_n x - P_m x) > \varepsilon\bigr) < \varepsilon
  \quad \text{for all } n, m \ge N.
\]

\medskip
\noindent\textbf{Step 2: Extension to arbitrary orthogonal projections.}
Let $P \in \mathcal{P}(H)$ be any orthogonal projection with $P \perp P_N$. Since
$P_N P = 0$ and $(P_{N+k} - P_N)P x \to Px$ in $H$ as $k \to \infty$, continuity of $q$ gives
$q((P_{N+k}-P_N)Px)\to q(Px)$ pointwise in $x$. Hence, wherever $q(Px)>\varepsilon$, eventually
(in $k$) $q((P_{N+k}-P_N)Px)>\varepsilon$ too, so $\mathbb 1_{\{q(Px)>\varepsilon\}}\le
\liminf_k\mathbb 1_{\{q((P_{N+k}-P_N)Px)>\varepsilon\}}$ pointwise; by Fatou's lemma,
\[
  \nu\bigl(x \in H : q(Px) > \varepsilon\bigr)
  \;\le\; \liminf_{k\to\infty}
    \nu\bigl(x \in H : q\bigl((P_{N+k} - P_N)Px\bigr) > \varepsilon\bigr).
\]
This one-sided bound is all that is used below; no equality, and no consideration of possible
atomic mass of the law of $q(Px)$ at level $\varepsilon$, is needed.

\medskip
\noindent\textbf{Step 3: Covariance domination via Anderson's inequality.}\footnote{Historical remark: Dudley, Feldman and Le Cam \cite{DFL}, in the very same 1971 paper that establishes the general locally convex structure theorem discussed in the Introduction, independently rederive (as their Section~5, ``Two inequalities of L.\ Gross for normal measure'') a statement that subsumes Anderson's inequality: if $T$ is a linear map on a finite-dimensional Hilbert space with $\|T\|\le 1$, then $n_H(T(C))\le n_H(C)$ for every centrally symmetric convex $C$ --- a contraction-operator generalisation of $\gamma(A+a)\le\gamma(A)$ --- together with the covariance-domination consequence $n_H(C)\le n_H((C\cap K)+K^\perp)$ used in Step~3 above. They attribute the statements to Lemmas 4.2 and 5.2 of Gross's 1962 paper \emph{Measurable functions on Hilbert space}, whose original proofs they call ``rather difficult'', and reprove them by an elementary geometric reduction to rank-one operators, without any reference to Anderson~\cite{A}; indeed Anderson's 1955 paper does not appear in their bibliography at all, even though Rajput~\cite{R}, writing on closely related material the following year, cites it explicitly for the same purpose.}
We now bound the right-hand side of the inequality established in Step~2. Set
\[
  Q := P_{N+k} - P_N,
\]
the orthogonal projection onto the finite-dimensional subspace
\[
  V := Q(H) = \operatorname{span}\{e_{N+1},\ldots,e_{N+k}\}.
\]
Note that, unlike
in a naive diagonal reduction, $P$ is here an \emph{arbitrary} orthogonal
projection with $P\perp P_N$: nothing forces $P(H)$ to be spanned by a subset
of the basis vectors $\{e_j\}$, so $P$ need not be diagonal in this basis. The
comparison below is carried out at the level of covariance operators on $V$,
and is valid for every such $P$.

Since $x$ has law $\nu = \mathcal N(0, I_H)$ (the coordinates $\xi_j(x)$ are
i.i.d.\ standard normal), the random vector $Qx$, valued in $V$, has law
$\mathcal N(0, I_V)$. Decompose
\[
  Qx \;=\; QPx \;+\; Q(I-P)x \;=:\; QPx + Z.
\]
Both $QPx$ and $Z$ are (jointly) Gaussian, being linear images of $x$, and
\[
  \operatorname{Cov}(QPx, Z) \;=\; Q\,P\,(I-P)\,Q \;=\; Q\,(P - P^2)\,Q \;=\; 0,
\]
since $P$ is idempotent. Being jointly Gaussian and uncorrelated, $QPx$ and
$Z$ are independent. Write
\[
  \Sigma_1 := QPQ \qquad (\text{covariance of } QPx \text{ on } V),
  \qquad
  \Sigma_2 := Q = I_V \qquad (\text{covariance of } Qx \text{ on } V).
\]
Since $P$ is an orthogonal projection, $0 \preceq P \preceq I_H$; conjugating
by $Q$ (the contraction property of orthogonal projections) gives
\[
  0 \;\preceq\; Q(I-P)Q \;=\; \Sigma_2 - \Sigma_1,
\]
i.e.\ $\Sigma_1 \preceq \Sigma_2$.

The set $A_\varepsilon := \{v \in V : q(v) \le \varepsilon\}$ is convex and
symmetric: symmetry follows from the absolute homogeneity of the seminorm $q$
($q(-v)=q(v)$), and convexity from its subadditivity. Since
$Qx = QPx + Z$ with $Z$ independent of $QPx$ and $\operatorname{Cov}(QPx) =
\Sigma_1 \preceq \Sigma_2 = \operatorname{Cov}(Qx)$, Anderson's inequality
--- in the direct form of \cite[Corollary~3]{A}, applied on the
finite-dimensional space $V$ --- yields
\[
  \nu\bigl(x : q(QPx) > \varepsilon\bigr)
  \;=\; \Pr\{QPx \notin A_\varepsilon\}
  \;\le\; \Pr\{Qx \notin A_\varepsilon\}
  \;=\; \nu\bigl(x : q(Qx) > \varepsilon\bigr).
\]
(Explicitly: Anderson's Corollary~3 states that if $X\sim\mathcal N(0,\Sigma_1)$
and $Z'\sim\mathcal N(0,\Sigma_2)$ with $\Sigma_2-\Sigma_1$ positive
semi-definite, then $\Pr\{X\in A\}\ge\Pr\{Z'\in A\}$ for every convex symmetric
$A$; here $X = QPx$, $Z' = Qx = QPx+Z$, since $QPx+Z$ has law $\mathcal
N(0,\Sigma_1+\operatorname{Cov}(Z)) = \mathcal N(0,\Sigma_2)$.)

Combining this with Step~1 (applied with $n=N+k$, $m=N$), we obtain
\[
  \nu\bigl(x \in H : q\bigl((P_{N+k}-P_N)Px\bigr) > \varepsilon\bigr)
  \;\le\;
  \nu\bigl(x \in H : q\bigl((P_{N+k}-P_N)x\bigr) > \varepsilon\bigr)
  \;\le\; \varepsilon.
\]

\begin{Remark}
This replaces a previous argument which reduced Step~3 to an index-matching
computation valid only when $P$ is diagonal with respect to the basis
$\{e_j\}$ (i.e.\ when $P(H)$ is spanned by a subset of the $e_j$'s); for a
general orthogonal projection $P \perp P_N$, as required by the definition of
Gross measurability, that reduction does not apply. The argument given above
depends only on the covariance ordering $\Sigma_1 \preceq \Sigma_2$, which
holds for every orthogonal projection $P$ by the contraction property
$0\preceq P\preceq I_H$, and is therefore valid without any assumption on the
relative position of $P$ and the basis.
\end{Remark}

\medskip
\noindent\textbf{Conclusion: strict inequality.} Combining Steps~1--3 (with $\varepsilon$
throughout) gives $\nu(q(Px)>\varepsilon)\le\varepsilon$, which is not quite strict as
Definition~1 requires. To obtain the strict inequality, run the same chain (Steps~1--3) with
$\varepsilon$ replaced by $\varepsilon/2$ throughout, giving a finite-dimensional projection
$P_{N}$ (depending on $\varepsilon/2$) such that $\nu(q(Px)>\varepsilon/2)\le\varepsilon/2$ for
every $P\perp P_N$. Since $\{q(Px)>\varepsilon\}\subseteq\{q(Px)>\varepsilon/2\}$,
$$
\nu\bigl(x: q(Px)>\varepsilon\bigr) \le \nu\bigl(x: q(Px)>\varepsilon/2\bigr) \le
\varepsilon/2 < \varepsilon, \qquad \forall P\perp P_N,
$$
which is exactly the strict inequality of Definition~1 (with this $P_N$ as the Gross witness
for $q$ at threshold $\varepsilon$). Since $q$ was an arbitrary
defining seminorm of $F$, the triple $(\iota, H, F)$ is an abstract Wiener space.
\end{proof}

\bigskip

\section{An application: nonlinear transformations of $\gamma$}

We transport here the genuinely general, local (partition-based) change-of-variables
theorem for maps $T=I+F$ -- Bogachev's {\sl Gaussian Measures}, Theorem 6.6.7,
together with Corollaries 6.6.8 and 6.6.9 -- rather than the preliminary
global-contraction statement (Theorem 6.6.3, common to the treatments of
\"Ust\"unel--Zakai, Nualart, and Kusuoka): it is Theorem 6.6.7 that actually removes
the global $H$-Lipschitz hypothesis $\lambda<1$, via a localization argument, and is
therefore the result whose transport is of genuine interest. Write $\gamma :=
\nu\circ\iota^{-1}$ for the Gaussian measure on $F$ constructed in
Theorem~\ref{uno}, and, as throughout, write $x+h$ for $x+\iota(h)$, $x\in F$,
$h\in H$. For a $\gamma$-measurable map $\Phi\colon F\to H$, let $\widehat\Phi :=
\Phi\circ L\colon E\to H$ denote its transport under the map $L$ of
Corollary~\ref{cor.transport}.

\begin{Lemma}[Intertwining of $D_H$ and $\delta$ under $L$]\label{lem.intertwine}
Let $U\colon L^2(F,\gamma)\to L^2(E,\nu_1)$, $Ug:=g\circ L$ (well defined $\nu_1$-a.e.\ since
$L$ is defined $\nu_1$-a.e.). Then $U$ is a surjective isometry (unitary), because
$\nu_1\circ L^{-1}=\gamma$ (Corollary~\ref{cor.transport}) and $L\colon E_0\to F_0$ is a Borel
isomorphism, so $U^{-1}g'=g'\circ L^{-1}$ is its inverse.

Recall from the proof of Corollary~\ref{cor.transport} the specific basis $e_k=v_{f_k}$,
$f_k\in F^*$, and coordinates $\eta_k(z)=k^2(z,e_k)_E$ on $E$, satisfying $f_k(L(z))=\eta_k(z)$
for $\nu_1$-a.e.\ $z$ (this is the identity established in the {\it Injectivity} step of that
proof, applied with $n\ge k$: $f_k(S_n(z))=\eta_k(z)$ for $n\ge k$, hence $f_k(L(z))=
\lim_nf_k(S_n(z))=\eta_k(z)$). For $m\in\N$ and $\psi\in C^\infty_b(\R^m)$, let
$$
g(x):=\psi\bigl(f_1(x),\ldots,f_m(x)\bigr), \qquad x\in F,
$$
a cylindrical function on $F$ built only from the specific functionals $f_k\in F^*$ (not from a
generic element of $F^*$). Then, directly from $f_k\circ L=\eta_k$ ($\nu_1$-a.e., for every
$k\le m$),
$$
Ug(z) = g(L(z)) = \psi\bigl(f_1(Lz),\ldots,f_m(Lz)\bigr) = \psi\bigl(\eta_1(z),\ldots,\eta_m(z)\bigr),
\qquad \nu_1\text{-a.e.}
$$
which is manifestly a bona fide cylindrical function on $(E,\nu_1)$ (the $\eta_k$ being
continuous linear functionals on $E$, unlike $f\circ L$ for a generic $f\in F^*$, which is only
$\nu_1$-a.e.\ defined and linear, not a priori an element of $E^*$ -- this is exactly why the
argument is restricted to the functionals $f_k$ tied to the chosen basis rather than to
arbitrary $f\in F^*$). Such functions $g$, as $m$ and $\psi$ vary, form a core for $D_H$ on
$(F,\gamma)$ (finite linear combinations of $f_k$'s already separate points of $H$ densely,
$(f_k)$ being a complete orthonormal basis of $F_\gamma^*$), and their images $Ug$, built from
$(\eta_k)$, form a core for $D_H$ on $(E,\nu_1)$ for the same reason ($(e_k)$ complete
orthonormal in $H(\nu_1)=H$).

To keep the scalar and $H$-valued operators distinct, let $U_H\colon L^2(F,\gamma;H)\to
L^2(E,\nu_1;H)$, $U_Hv:=v\circ L$, denote the componentwise extension of $U$ to $H$-valued
functions (a unitary by the same isometry argument as for $U$, $H$ being a fixed Hilbert
space). On the core above, a direct computation with the chain rule, using $D_H(f_k\circ
L)=D_H(f_k)\circ L$ on the $F$ side and $D_H\eta_k=e_k$ (constant) on the $E$ side --
consistent since $f_k\circ L=\eta_k$ -- gives, for every scalar $g$ in the core,
$$
D_H(Ug) = U_H(D_Hg).
$$
The core being dense in the domain of $D_H$ (in the graph norm) both on $(F,\gamma)$ and on
$(E,\nu_1)$, and $U,U_H$ being unitary intertwiners of the two graph norms by the identity
above, this extends by closure: for every scalar $g\in L^2(F,\gamma)$,
$$
g\in\mathrm{Dom}(D_\gamma) \iff Ug\in\mathrm{Dom}(D_{\nu_1}), \qquad
D_{\nu_1}(Ug) = U_H(D_\gamma g).
$$
Taking adjoints (of closed, densely defined operators, transported by the unitary pair
$U,U_H$), for every $H$-valued $v\in L^2(F,\gamma;H)$,
$$
v\in\mathrm{Dom}(\delta_\gamma) \iff U_Hv\in\mathrm{Dom}(\delta_{\nu_1}), \qquad
U(\delta_\gamma v) = \delta_{\nu_1}(U_Hv).
$$
In particular, for $\Phi\colon F\to H$ (an $H$-valued field, so $\Phi\in\mathrm{Dom}(\delta_\gamma)$
is tested via $U_H$, not $U$) with $\widehat\Phi:=U_H\Phi=\Phi\circ L$,
$$
U(\delta_\gamma\Phi) = \delta_{\nu_1}(U_H\Phi) = \delta_{\nu_1}\widehat\Phi,
$$
i.e.\ $\delta_\gamma\Phi(x)=\delta_{\nu_1}\widehat\Phi(L^{-1}x)$ for $\gamma$-a.e.\ $x$.
\end{Lemma}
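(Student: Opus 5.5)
The plan is to prove the intertwining identity $D_{\nu_1}(Ug)=U_H(D_\gamma g)$ first on the cylindrical core $g=\psi(f_1,\ldots,f_m)$, then pass to closures, and finally obtain the statement for $\delta$ by duality.

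\emph{Unitarity of $U$.} Since $\nu_1\circ L^{-1}=\gamma$ (Corollary~\ref{cor.transport}), change of variables gives $\|g\circ L\|_{L^2(\nu_1)}=\|g\|_{L^2(\gamma)}$, so $U$ is an isometry. For surjectivity, given $g'\in L^2(\nu_1)$, set $g:=g'\circ L^{-1}$ on $L(E_0)$ and $0$ elsewhere. This $g$ is Borel because $L\colon E_0\to L(E_0)$ is a Borel isomorphism by Lusin--Souslin, and $Ug=g'$ holds $\nu_1$-a.e. The same argument applies componentwise to $U_H$, since $H$ is fixed and separable.

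\emph{Identity on the core.} By the \emph{Injectivity} step of Corollary~\ref{cor.transport}, $f_k\circ L=\eta_k$ on $E_0$, so $Ug=\psi(\eta_1,\ldots,\eta_m)$. By the chain rule,
$$D_\gamma g=\sum_{i\le m}\partial_i\psi(f_1,\ldots,f_m)\,v_{f_i}=\sum_i\partial_i\psi(f_1,\ldots,f_m)\,e_i,$$
and on $E$, $\eta_i$ is a continuous linear functional whose $H$-gradient is $e_i$, because $\eta_i(j_Eh)=(h,e_i)_H$. This gives
$$D_{\nu_1}(Ug)=\sum_i\partial_i\psi(\eta_1,\ldots,\eta_m)\,e_i=U_H(D_\gamma g).$$

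\emph{Core property and closure (main obstacle).} This is where I expect the difficulty. I would argue it as follows. Because $(f_k)$ is a complete orthonormal basis of $F^*_\gamma$, the $\sigma$-algebra generated by the $f_k$ is $\gamma$-complete. Hence the functions $\psi(f_1,\ldots,f_m)$ are dense in $L^2(\gamma)$, and, via the Wiener chaos / Hermite expansion in the variables $f_k$, they form a core for the closable operator $D_\gamma$, that is, the Sobolev space $W^{2,1}$ is the closure in graph norm. The same holds on $E$ with $(\eta_k)$. The point needing care is that the standard core statement (Bogachev, Ch.~5) uses $\mathcal{F}C_b^\infty$ built from \emph{all} of $F^*$. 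I would reduce to functionals in $\operatorname{span}(f_k)$ by approximating any $f\in F^*$ in $L^2(\gamma)$ by finite combinations $\sum_{k\le n}(v_f,e_k)_Hf_k$, and controlling the graph norm through $\|D(\psi(\ell))\|$, which involves only $|v_\ell|_H$.

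Once both cores are in hand, $(U,U_H)$ maps the graph of $D_\gamma$ restricted to the core isometrically onto the graph of $D_{\nu_1}$ restricted to the corresponding core. Taking closures yields the domain equivalence and the identity on all of $\mathrm{Dom}$.

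\emph{Adjoints.} Let $v\in L^2(\gamma;H)$. Then $v\in\mathrm{Dom}(\delta_\gamma)$ iff $g\mapsto(D_\gamma g,v)_{L^2}$ is $L^2$-bounded. Using $D_{\nu_1}U=U_HD_\gamma$ and unitarity,
$$(D_{\nu_1}g',U_Hv)=(U_HD_\gamma U^{-1}g',U_Hv)=(D_\gamma U^{-1}g',v),$$
so the boundedness conditions for $v$ and for $U_Hv$ are equivalent, and $\delta_{\nu_1}(U_Hv)=U\delta_\gamma v$. Specialising to $v=\Phi$ and composing with $L^{-1}$ on $L(E_0)$ gives the pointwise a.e.\ formula.
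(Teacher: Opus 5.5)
Your proposal is correct and follows the paper's route. The steps are the same: unitarity of $U,U_H$ from $\nu_1\circ L^{-1}=\gamma$ and the Borel isomorphism $L\colon E_0\to F_0$; the chain-rule identity $D_{\nu_1}(Ug)=U_H(D_\gamma g)$ on cylindrical functions of $f_1,\dots,f_m$ via $f_k\circ L=\eta_k$ on $E_0$; extension by closure; and duality for $\delta$. Your graph-norm approximation of $\phi(\ell_1,\dots,\ell_p)$ by $\phi(\ell_1^{(n)},\dots,\ell_p^{(n)})$, with $\ell_j^{(n)}=\sum_{k\le n}(v_{\ell_j},e_k)_Hf_k$, gives the core property that the paper only asserts, and it is the right way to justify that step.
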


By Lemma~\ref{lem.intertwine} together with $D_H\Phi(L(x))=D_H\widehat\Phi(x)$ (immediate from
the same chain rule) and the resulting equality of the Hilbert--Schmidt norms and Carleman--
Fredholm determinants of $D_H\Phi(L(x))$ and $D_H\widehat\Phi(x)$, set
$$
\Lambda_\Phi(x) := \bigl|\det{}_2\bigl(I_H+D_H\Phi(x)\bigr)\bigr|\,
\exp\Bigl[\delta \Phi(x)-\tfrac12|\Phi(x)|_H^2\Bigr]
$$
(the absolute value is essential -- see Remark~\ref{rem.absval} below -- and matches Bogachev's
own definition, \cite{BG} p.~304, which an earlier draft of this formula failed to reproduce)
whenever $\Phi$ is Fr\'echet differentiable along $H$ with Hilbert--Schmidt derivative
and $\delta \Phi$ is well defined; then $\Lambda_{\widehat\Phi} = \Lambda_\Phi\circ L$
whenever both sides are defined.

\begin{Remark}[The absolute value in $\Lambda_\Phi$ is necessary]\label{rem.absval}
Without it the formula can give a negative, hence meaningless, ``density''. Take $H=F=\R$,
$\gamma=N(0,1)$, $\Phi(x)=-2x$, so $T(x)=x+\Phi(x)=-x$. Then $D_H\Phi\equiv-2$, and
$\det_2(I+D_H\Phi)=\det_2(-1)=(1-2)e^{-(-2)}=-e^2$ (using $\det_2(1+K)=(1+K)e^{-K}$ in one
dimension). With the convention $\delta h=-\widehat h$ of Remark~\ref{rem.sign} (so, for the
linear field $\Phi(x)=-2x$, $\delta\Phi(x)=\Phi'(x)-x\Phi(x)=-2+2x^2$, the standard
one-dimensional divergence formula under this sign convention), $\exp[\delta\Phi-\frac12|\Phi|^2]
=\exp[-2+2x^2-2x^2]=e^{-2}$, so $\det_2(I+D_H\Phi)\cdot\exp[\delta\Phi-\frac12|\Phi|^2]=-e^2\cdot
e^{-2}=-1$ identically in $x$ -- negative, and the resulting ``density'' would give, for
$g\ge0$, $\int g\,d\gamma=\int g(-x)(-1)\,\gamma(dx)=-\int g(-x)\,\gamma(dx)\le0$, impossible
unless $g\equiv0$. With the absolute value, $\Lambda_\Phi\equiv1$ instead, which is exactly
correct: $T(x)=-x$ preserves $\gamma=N(0,1)$ and is a (global, measure-preserving) bijection.
\end{Remark}

\begin{Remark}[Sign convention for $\delta$]\label{rem.sign}
The formula for $\Lambda_\Phi$ and its sign convention have now been checked directly against
Bogachev's own derivation (\cite{BG}, p.~302, the finite-dimensional Ostrogradsky--Jacobi
computation preceding Theorem~6.6.3). For $F(x)\equiv h\in H$ constant ($D_H\Phi\equiv0$), that
computation gives explicitly
$$
\Lambda_h(x) = \exp\bigl[-\widehat h(x)-\tfrac12|h|_H^2\bigr], \qquad \widehat h(x):=(x,h)_H
\ \text{(extended a.e.)},
$$
i.e., comparing with the defining formula $\Lambda_h=\exp[\delta h-\frac12|h|_H^2]$, the
convention is $\delta h=-\widehat h$ for a constant field -- a \emph{minus} sign, not the plus
sign stated in an earlier draft of this remark. This is consistent throughout: by
Corollary~\ref{cor688} (transported from Bogachev's Corollary~6.6.8, matching \cite{BG}
(6.6.5)--(6.6.6) exactly), $\dfrac{d(\gamma\circ T^{-1})}{d\gamma}(x)=\dfrac1{\Lambda_h(T^{-1}x)}$
for $T=I_F+h$ (translation by $+h$, so $T^{-1}(x)=x-h$); using $\Lambda_h(x-h)=
\exp[-\widehat h(x-h)-\tfrac12|h|^2]=\exp[-\widehat h(x)+|h|^2-\tfrac12|h|^2]$ (linearity of
$\widehat h$, $\widehat h(h)=|h|_H^2$) gives
$$
\frac{d(\gamma\circ T^{-1})}{d\gamma}(x) = \exp\bigl[\widehat h(x)-\tfrac12|h|_H^2\bigr],
$$
which is exactly the classical Cameron--Martin density of the law of $X+h$ (i.e.\ of
$\gamma\circ T^{-1}$ for $T=I+h$) with respect to $\gamma$ -- confirming the formula for
$\Lambda_\Phi$, with $\delta h=-\widehat h$, is self-consistent and correctly transported.
\end{Remark}

Following Bogachev, Definition 6.6.4, for $\tau\colon F\to(0,\infty]$
$\gamma$-measurable with $\Omega_\tau=\{\tau>0\}$, let $\mathcal{H}C^1_\tau(\gamma,H)$
denote the class of $\gamma$-measurable $\Phi\colon F\to H$ such that, for every
$x\in\Omega_\tau$, $h\mapsto \Phi(x+h)$ is Fr\'echet differentiable along $H$ on
$U_{\tau(x)}=\{h\in H:|h|_H<\tau(x)\}$, with Hilbert--Schmidt, continuous-in-$h$
derivative. This is exactly Definition~1 of the companion paper, transported back
to $F$, and is the same local regularity class underlying its local
(Kusuoka-type) theorem. Writing $\tau_E:=\tau\circ L$, so
$\Omega_{\tau_E}=L^{-1}(\Omega_\tau)$, we have
$$
\Phi\in\mathcal{H}C^1_\tau(\gamma,H) \iff \widehat\Phi=\Phi\circ L\in
\mathcal{H}C^1_{\tau_E}(\nu_1,H),
$$
since Fr\'echet differentiability along $H$, the Hilbert--Schmidt norm, and
continuity in $h$ are all intrinsic to $(H,\cdot)$ and unaffected by composition
with $L$ (cf.\ Corollary~\ref{cor.transport}).

\begin{Theorem}[Nonlinear transformations, general (local) case]\label{nonlin2}
Let $\Phi\in\mathcal{H}C^1_\tau(\gamma,H)$ for some $\tau$ with $\Omega_\tau$ of full
$\gamma$-measure, and let $T=I_F+\iota\circ \Phi$. Put
$$
M := \bigl\{x\in\Omega_\tau:\ \det{}_2\bigl(I_H+D_H\Phi(x)\bigr)\ne 0\bigr\}.
$$
Then there is a measurable partition $\{M_n\}_{n\in\mathbb N}$ of $M$ (up to a
$\gamma$-null set) and, for each $n$, a bounded, $H$-Lipschitzian map
$\Phi_n\colon F\to H$ (Lipschitz constant $\le5/8$) such that, writing
$T_n:=I_F+\iota\circ \Phi_n$:
\begin{enumerate}
\item[(i)] $T_n=T$ on $M_n$, $T_n$ is a bijection \emph{of $F$ onto itself}, and
$\gamma\circ T_n^{-1}\sim\gamma$;
\item[(ii)] for every $n\in\N$ and every bounded measurable $g\colon F\to\mathbb R$,
$$
\int_{F} g(T_n(x))\,\Lambda_{\Phi_n}(x)\,\gamma(dx) = \int_{F} g(x)\,\gamma(dx);
$$
\item[(iii)] the set $T^{-1}(x)\cap M$ has at most countable cardinality
$N(x,M):=\mathrm{card}\bigl(T^{-1}(x)\cap M\bigr)$ for $\gamma$-a.e.\ $x$, and, for every bounded
measurable $f$,
$$
\int_F f(x)\,N(x,M)\,\gamma(dx) = \int_{F} f(T(x))\,\Lambda_\Phi(x)\,\gamma(dx),
$$
and $\gamma|_M\circ T^{-1}\ll\gamma$ with
$$
\frac{d(\gamma|_M\circ T^{-1})}{d\gamma}(x) =
\sum_{y\in T^{-1}(x)\cap M}\frac{1}{\Lambda_\Phi(y)}.
$$
\end{enumerate}
\end{Theorem}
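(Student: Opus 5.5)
The plan is to prove Theorem~\ref{nonlin2} by transport: pull everything back to the Hilbert-space abstract Wiener space $(E,H,\nu_1)$ through the Borel isomorphism $L\colon E_0\to F_0:=L(E_0)$ of Corollary~\ref{cor.transport}. There Bogachev's Theorem~6.6.7 and Corollaries~6.6.8--6.6.9 are available verbatim, since $E$ is a separable Hilbert (hence Banach) space carrying the Radon Gaussian measure $\nu_1$ with $H(\nu_1)=H$. I then push the conclusions forward with $L$.

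The first step is to set $\widehat\Phi:=\Phi\circ L$ and $\tau_E:=\tau\circ L$. By the equivalence recorded just before the theorem, $\widehat\Phi\in\mathcal{H}C^1_{\tau_E}(\nu_1,H)$, and $\Omega_{\tau_E}=L^{-1}(\Omega_\tau)$ has full $\nu_1$-measure because $\nu_1\circ L^{-1}=\gamma$. Let $\widehat T:=I_E+j_E\circ\widehat\Phi$. The key algebraic identity is
$$
L\circ\widehat T = T\circ L \quad\text{on } E_0 .
$$
This follows from the linearity of $L$ on $E_0$, from $E_0+j_E(H)=E_0$, and from $L\circ j_E=\iota$:
$$
L\bigl(z+j_E\widehat\Phi(z)\bigr)=L(z)+\iota\,\Phi(L z).
$$
Since $H$-derivatives and $\det_2$ are intrinsic to $H$, we have $D_H\widehat\Phi=(D_H\Phi)\circ L$. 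Hence $\widehat M:=L^{-1}(M)\cap E_0$ is exactly the set $M$ of Bogachev's theorem for $\widehat\Phi$, and $\Lambda_{\widehat\Phi}=\Lambda_\Phi\circ L$ by Lemma~\ref{lem.intertwine} and the remarks following it.

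Next I apply Bogachev's Theorem~6.6.7 on $E$. It yields a partition $\{\widehat M_n\}$ of $\widehat M$ and bounded $H$-Lipschitz maps $\widehat\Phi_n\colon E\to H$ with constant $\le 5/8$, such that $\widehat T_n:=I_E+j_E\widehat\Phi_n$ agrees with $\widehat T$ on $\widehat M_n$, is a bijection of $E$, and satisfies the change-of-variables formula with $\Lambda_{\widehat\Phi_n}$. I then define $M_n:=L(\widehat M_n)$, which is Borel by Lusin--Souslin, and
$$
\Phi_n:=\widehat\Phi_n\circ L^{-1} \ \text{on } F_0 .
$$
Outside $F_0$ I need a definition that preserves bijectivity and the Lipschitz bound. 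The plan is to observe that $F_0$ may be replaced by a smaller full-measure set that is invariant under $H$-translations, and that $E\setminus E_0$ is a union of $j_E(H)$-cosets. On the complement I set $\Phi_n:=0$, so $T_n$ is the identity there. Since both $F_0$ and its complement are $H$-invariant, $T_n$ maps each into itself. Bijectivity of $T_n$ on $F_0$ then follows from the conjugation $T_n=L\widehat T_nL^{-1}$, which also requires checking that $\widehat T_n$ preserves $E_0$; this holds because $\widehat T_n-I\in j_E(H)$. The $H$-Lipschitz property is preserved because $L$ intertwines $H$-shifts: $L(z+j_Eh)=L(z)+\iota h$.

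With this in place, (i)--(iii) follow by change of variables. For (ii) and (iii), write
$$
\int_F g(T_n x)\,\Lambda_{\Phi_n}(x)\,\gamma(dx)=\int_E g(L\widehat T_n z)\,\Lambda_{\widehat\Phi_n}(z)\,\nu_1(dz)
$$
and apply Bogachev's formula to $g\circ L$. The counting function is preserved as well: $L$ is a bijection $E_0\to F_0$ intertwining $T$ and $\widehat T$, so $N(Lz,M)=N(z,\widehat M)$. This gives the multiplicity formula and the Radon--Nikodym density, the latter via Corollary~6.6.9. The equivalence $\gamma\circ T_n^{-1}\sim\gamma$ transfers directly.

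I expect the main obstacle to be the global part of (i): arranging that $T_n$ is a bijection of all of $F$ with $\Phi_n$ genuinely $H$-Lipschitz everywhere. This hinges on making the conull set $F_0$ exactly $H$-translation invariant and on extending $\Phi_n$ off $F_0$ without breaking injectivity. The invariance $E_0+j_E(H)=E_0$ from Corollary~\ref{cor.transport}, together with the definition of $\Phi_n$ as zero on the invariant complement, is what I would try first.
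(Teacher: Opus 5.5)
Your proposal is correct and is essentially the paper's own proof: you pull back to $(E,\nu_1)$ via $L$, apply Bogachev's Theorem~6.6.7 to $\widehat\Phi=\Phi\circ L$, conjugate $\widehat T_n$ by $L$ on $F_0=L(E_0)$, and set $\Phi_n:=0$ off $F_0$. No smaller set is needed, since $F_0$ is already $\iota(H)$-invariant: $E_0$ is a linear subspace containing $j_E(H)$. Your explicit identities $L\circ\widehat T=T\circ L$ on $E_0$ and $L(z+j_Eh)=L(z)+\iota h$ spell out what the paper calls ``intrinsic to $H$'', and they give $T_n=T$ on $M_n$, the Lipschitz bound, and $N(Lz,M)=N(z,\widehat M)$. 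The one slip is citing Corollary~6.6.9 for the density in (iii); like the multiplicity formula, it comes from transporting the extended statement of Theorem~6.6.7 itself.
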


\begin{Remark}[Correcting (ii)]\label{rem.ii}
An earlier draft summed (ii) over $n$ and restricted the left integral to $M_n$,
$\sum_n\int_{M_n}g(T_n(x))\Lambda_{\Phi_n}(x)\gamma(dx)=\int_Fg\,d\gamma$. This is false in
general: take $H=F=\R$, $\gamma=N(0,1)$, $\Phi(x)=-x$. Then $T(x)=0$ identically,
$D_H\Phi\equiv-1$, so $\det_2(I_H+D_H\Phi)=\det_2(0)=0$ everywhere, hence $M=\varnothing$, the
partition $\{M_n\}$ is empty, and the summed identity reads $0=\int_Fg\,d\gamma$, false for
$g\equiv1$. The correct statement -- matching Bogachev's own (6.6.8), \cite{BG} p.~305, literally
-- is unsummed and unrestricted: it holds separately for each $n$, as an ordinary
change-of-variables identity for the single, globally defined bijection $T_n\colon F\to F$
(each $T_n$ is a bijection of all of $F$, by (i), regardless of whether $M$ or $M_n$ is empty or
not); there is no claim, summed or otherwise, about integrating over $M$ or $M_n$ in (ii). With
$\Phi(x)=-x$ there simply are no $M_n$'s and (ii) has no content to check, so no contradiction
arises.
\end{Remark}

\begin{proof}
By the equivalence recorded above, $\widehat\Phi\in\mathcal{H}C^1_{\tau_E}(\nu_1,H)$
with $\Omega_{\tau_E}$ of full $\nu_1$-measure. Apply Bogachev's Theorem 6.6.7
verbatim to $(\widehat\Phi,E,\nu_1)$: this produces a partition $\{\widehat M_n\}$
of $\widehat M := \{x\in\Omega_{\tau_E}: \det_2(I_H+D_H\widehat\Phi(x))\ne0\}$,
bounded $H$-Lipschitzian $\widehat\Phi_n$ (Lipschitz constant $\le5/8$), bijections
$\widehat T_n := I_E+\widehat\Phi_n$ \emph{of $E$ onto itself}, agreeing with
$\widehat T=I_E+\widehat\Phi$ on $\widehat M_n$ and transforming $\nu_1$ into an
equivalent measure, the identity analogous to (ii) (holding separately for each $n$, on
$(E,\nu_1)$), and the
extended statement (iii) on $\widehat M$.

\emph{From $E$ to $F_0:=L(E_0)$.} Recall (Corollary~\ref{cor.transport}, Remark~\ref{rem.domain})
that $L$ is a genuine (pointwise) bijection $E_0\to F_0$, with $E_0$ a Borel linear subspace of
full $\nu_1$-measure containing $j_E(H)$ (Corollary~\ref{cor.transport}). Since $\widehat\Phi_n$
is $H$-valued, $\widehat T_n(x)-x=\iota_E(\widehat\Phi_n(x))\in j_E(H)$ for every $x\in E$ (writing
$\iota_E:=j_E$ for the copy of $\iota$ on the $E$ side); as $E_0$ is a linear subspace containing
$j_E(H)$, $x\in E_0\iff \widehat T_n(x)\in E_0$. Hence $\widehat T_n$ restricts to a bijection of
$E_0$ onto itself (its inverse, of the same form $I_E-$ an $H$-Lipschitz map, preserves $E_0$ for
the same reason). Conjugating by the genuine bijection $L\colon E_0\to F_0$, $T_n:=L\circ
\widehat T_n\circ L^{-1}$ is, at this stage, a well-defined bijection of $F_0$ onto itself
(likewise $M_n:=L(\widehat M_n\cap E_0)$, $M:=L(\widehat M\cap E_0)$, understood as subsets of
$F_0$, and $T:=L\circ\widehat T\circ L^{-1}$ on $F_0$); since $\gamma(F_0)=1$, the measure
identities (ii)--(iii), being intrinsic to $\gamma$-a.e.\ statements, are unaffected by this
restriction.

\emph{Extending $T_n$ to a bijection of all of $F$.} Set $\Phi_n(x):=0$ (equivalently
$T_n(x):=x$) for $x\notin F_0$. This is still $H$-Lipschitz with the same constant $\le5/8$:
$\Phi_n$ vanishes identically off $F_0$ and coincides with the (already $H$-Lipschitz) map
constructed above on $F_0$; since $F_0$ is itself a Borel linear subspace stable under
$\iota(H)$-translations (being the bijective image, under the linear bijection $L$, of the
$j_E(H)$-stable subspace $E_0$: $y\in F_0, h\in H \Rightarrow y+\iota(h) = L(L^{-1}y+j_E(h))\in
L(E_0+j_E(H))=L(E_0)=F_0$), the two pieces of $\Phi_n$ never interact under an $H$-shift, and
$H$-Lipschitz continuity is a condition tested along such shifts within each stability class.
With this extension, $T_n$ is a bijection of $F$ onto itself: it maps $F_0$ bijectively to $F_0$
(as constructed) and $F\setminus F_0$ bijectively (identically) to itself, and these two pieces
are disjoint and exhaustive. This proves (i) as stated. Since the Hilbert--Schmidt
norm, the density $\Lambda$, and the cardinality of the fibre $T^{-1}(x)\cap M$
are all intrinsic to $(H,\cdot)$ and unaffected by composition with $L$, the
integral identity in (ii) (again, term by term for each $n$) and the extended statement (iii)
transfer term
by term to $\gamma$ on $F$ (both sides of (ii)--(iii) being $\gamma$-a.e.\ statements, hence
insensitive to the $\gamma$-null set $F\setminus F_0$ on which $T_n$ was set to the identity).
\end{proof}

\begin{Corollary}[Transport of Corollary 6.6.8]\label{cor688}
In the situation of Theorem~\ref{nonlin2}, suppose in addition that $M$ has full
$\gamma$-measure and that $T|_M\colon M\to T(M)$ is a bijection, with $T(M)$ of
full $\gamma$-measure as well. Write $S:=(T|_M)^{-1}\colon T(M)\to M$ for this inverse, extended
arbitrarily (say, by the identity) outside the conull set $T(M)$, so that the notation
$T^{-1}(x)$ below unambiguously means $S(x)$ and is never confused with a global inverse of $T$
(which need not exist). Then $\gamma\circ T^{-1}\sim\gamma$, with
$$
\frac{d(\gamma\circ T^{-1})}{d\gamma}(x) = \frac{1}{\Lambda_\Phi(S(x))}.
$$
If, in addition, $T$ has Lusin's property $(N)$, then
$$
\frac{d(\gamma\circ S^{-1})}{d\gamma}(x) = \Lambda_\Phi(x).
$$
\end{Corollary}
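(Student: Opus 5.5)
The plan is to deduce Corollary~\ref{cor688} from Theorem~\ref{nonlin2}(iii) rather than transporting Bogachev's Corollary~6.6.8 through $L$ a second time, since (iii) already lives on $F$.

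\emph{First formula.} Since $T|_M\colon M\to T(M)$ is a bijection, for $x\in T(M)$ the fibre $T^{-1}(x)\cap M$ is the single point $S(x)$, and it is empty for $x\notin T(M)$. As $\gamma(T(M))=1$, we get $N(x,M)=1$ for $\gamma$-a.e.\ $x$. Also $\gamma(M)=1$ gives $\gamma|_M=\gamma$. Hence the density formula in (iii) reduces to
\[
\frac{d(\gamma\circ T^{-1})}{d\gamma}(x)=\frac{1}{\Lambda_\Phi(S(x))}\qquad\gamma\text{-a.e.}
\]
This gives $\gamma\circ T^{-1}\ll\gamma$. For equivalence, the density must be a.e.\ positive and finite. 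On $M$ we have $\det_2(I_H+D_H\Phi)\ne0$, and the exponential factor is finite a.e., so $0<\Lambda_\Phi<\infty$ a.e.\ on $M$. Because $S$ maps $T(M)$ into $M$, the quantity $1/\Lambda_\Phi(S(x))$ is then in $(0,\infty)$ for $\gamma$-a.e.\ $x$. One check is needed here: the null set in $M$ where $\Lambda_\Phi$ could be infinite must pull back under $S$ to a $\gamma$-null set. I would obtain this from (iii) applied with $f=\mathbb 1_{S^{-1}(Z)}$, which gives $\int_F\mathbb 1_Z\Lambda_\Phi\,d\gamma=\gamma(S^{-1}Z)$, and this vanishes for $\gamma(Z)=0$.

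\emph{Second formula.} Here we need $\gamma\circ S^{-1}$, which is the pullback of $\gamma$ under $T$ restricted to $M$: $(\gamma\circ S^{-1})(B)=\gamma(T(B\cap M))$ up to null sets. I would test on bounded measurable $f$ and write $f=g\circ T$ on $M$ with $g:=f\circ S$. Then (iii), with $N\equiv1$ a.e., gives
\[
\int_F f\circ S\,d\gamma=\int_F f\,\Lambda_\Phi\,d\gamma .
\]
So $\Lambda_\Phi$ is the density of the image of $\gamma$ under $S$, i.e.\ of $\gamma\circ S^{-1}$. Lusin's property $(N)$ for $T$ is what allows replacing "$x\in T(M)$" by "$x\in F$" without losing mass. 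It ensures that $T$ maps the null set $F\setminus M$ to a null set, so the arbitrary extension of $S$ outside $T(M)$, and the behaviour of $T$ off $M$, do not matter. It also ensures that $S^{-1}(B)=T(B)$ agrees with $T(B\cap M)$ modulo null sets.

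\emph{Main obstacle.} The delicate point is the bookkeeping of null sets in this last step, namely making precise that $\gamma\circ S^{-1}$ equals $B\mapsto\gamma(T(B))$ and that property $(N)$ is exactly what is needed. If the direct route via (iii) stalls, my fallback is to transport the statement to $(E,\nu_1)$. There I would use that $L\colon E_0\to F_0$ is a Borel isomorphism with $\nu_1\circ L^{-1}=\gamma$ (Corollary~\ref{cor.transport}). Since $\Lambda_{\widehat\Phi}=\Lambda_\Phi\circ L$, I would invoke Bogachev's Corollary~6.6.8 verbatim for $\widehat T$. I would then conjugate back, checking that property $(N)$ and bijectivity on a conull set are preserved under conjugation by $L$. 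This preservation holds because $L$ carries null sets to null sets in both directions.
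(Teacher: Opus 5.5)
Your argument is correct, but its main line is not quite the paper's. The paper's proof is essentially your fallback. It says the statement is immediate from Theorem~\ref{nonlin2}(iii) with $M=F$ modulo a null set, but the justification it actually gives is that Bogachev's Corollary~6.6.8 is transported verbatim from $(E,\nu_1)$ through $L$. The only new observation there is that property $(N)$, being defined through null sets, is preserved under conjugation by the measure isomorphism $L$.

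You instead stay on $F$ and extract everything from (iii) by choosing test functions:
\begin{itemize}
\item $N(\cdot,M)=\mathbb 1_{T(M)}$ a.e.\ for the first formula;
\item $f=\mathbb 1_{S^{-1}(Z)}$ for the positivity check behind $\gamma\circ T^{-1}\sim\gamma$;
\item $g=f\circ S$ for the second formula.
\end{itemize}
What this buys is a self-contained argument that uses (iii) as a black box and makes the null-set bookkeeping explicit. You never have to check that $M$, $T(M)$, $S$ and the bijectivity hypothesis correspond to their counterparts on $E$ under $L$, which the paper handles only by reference to the proof of Theorem~\ref{nonlin2}. The paper's route buys uniformity with its general transport philosophy and reproduces Bogachev's formulation exactly.

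One point in your write-up needs correcting: your computation $\int_F f\circ S\,d\gamma=\int_F f\,\Lambda_\Phi\,d\gamma$ never invokes property $(N)$, so your explanation of its role is off.
\begin{itemize}
\item With $S$ defined as $(T|_M)^{-1}$ extended by the identity, one has $S^{-1}(B)=T(B\cap M)\cup(B\setminus T(M))$, not $T(B)$.
\item The second piece is null because $\gamma(T(M))=1$. That is a hypothesis of the corollary, not a consequence of $(N)$.
\item Likewise, the behaviour of $T$ off $M$ never enters the definition of $S$, so $(N)$ is not what makes it irrelevant.
\end{itemize}
Property $(N)$ becomes relevant only if you want to identify $\gamma\circ S^{-1}$ with the set function $B\mapsto\gamma(T(B))$. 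The sets $T(B)$ and $T(B\cap M)$ differ by the image of the null set $B\setminus M$, and $(N)$ is exactly what controls that image. This is the form in which the hypothesis is inherited from Bogachev along the transport route.
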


\begin{proof}
Immediate from Theorem~\ref{nonlin2}(iii) with $M=F$ up to a $\gamma$-null set,
transported term by term from Bogachev's Corollary 6.6.8 exactly as in the proof
of Theorem~\ref{nonlin2}. Lusin's property $(N)$ is defined purely in terms of
null sets, so it transports along $L$ as well: since $L$ and $L^{-1}$ carry
$\nu_1$-null sets to $\gamma$-null sets and conversely (again because $L$ is a
measure isomorphism), $\widehat T$ has property $(N)$ with respect to $\nu_1$ if
and only if $T=L\circ\widehat T\circ L^{-1}$ has property $(N)$ with respect to
$\gamma$.
\end{proof}

\begin{Remark}[A caveat: Lusin's property is not automatic]\label{rem.lusin}
As Bogachev notes immediately after the analogue of Corollary~\ref{cor688},
Lusin's property $(N)$ does not follow automatically from bijectivity of $T$
alone, so $\gamma\circ T^{-1}$ may fail to be equivalent to $\gamma$ even when
$T$ is a bijection of a full-measure set: on the real line, a full-measure open
set $\Omega$ with complement $K$ of cardinality continuum can be sent by a
one-to-one nondegenerate smooth map onto an open set $\Omega_1$ with complement
$C$ of positive measure, with the restriction $T\colon K\to C$ an arbitrary
bijection. This is a genuine obstruction, not an artefact of the ambient
topology: transporting the construction through $L$ produces the same
phenomenon on $(E,\nu_1)$, or on $(F,\gamma)$, since $L$ preserves null sets in
both directions but cannot by itself upgrade a merely bijective, measurable $T$
to one enjoying property $(N)$.
\end{Remark}

\begin{Corollary}[Transport of Corollary 6.6.9]\label{cor689}
Let $T=I_F+\iota\circ \Phi$, where $\Phi\in\mathcal{H}C^1(\gamma,H)$ (equivalently
$\widehat\Phi\in\mathcal{H}C^1(\nu_1,H)$, i.e.\ the case $\tau\equiv\infty$, with no
localization). If $D_H\Phi(x)$ has no eigenvalue $-1$ for every $x$ in some
$\gamma$-measurable set $B$, then $\gamma|_B\circ T^{-1}$ is absolutely
continuous with respect to $\gamma$.
\end{Corollary}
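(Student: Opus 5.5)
We transport Bogachev's Corollary~6.6.9 along the measure isomorphism $L$ of Corollary~\ref{cor.transport}, following the pattern of the proofs of Theorem~\ref{nonlin2} and Corollary~\ref{cor688}.

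\emph{Transfer of hypotheses.} With $\tau\equiv\infty$, the equivalence recorded before Theorem~\ref{nonlin2} gives $\widehat\Phi=\Phi\circ L\in\mathcal{H}C^1(\nu_1,H)$. Set $\widehat B:=L^{-1}(B\cap F_0)\subset E_0$. Since $L\colon E_0\to F_0$ is a Borel isomorphism carrying $\nu_1$ to $\gamma$, this is a $\nu_1$-measurable set. For $z\in\widehat B$ we have $D_H\widehat\Phi(z)=D_H\Phi(L z)$, which is the chain-rule identity already used for $\Lambda_{\widehat\Phi}=\Lambda_\Phi\circ L$. Indeed, for $h\in H$, $L(z+j_E h)=Lz+\iota h$, because $E_0$ is a linear subspace containing $j_E(H)$ and $L\circ j_E=\iota$. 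So $h\mapsto\widehat\Phi(z+j_Eh)$ is literally the map $h\mapsto\Phi(Lz+\iota h)$, and the two derivatives are the same Hilbert--Schmidt operator on $H$. In particular $D_H\widehat\Phi(z)$ has no eigenvalue $-1$ for every $z\in\widehat B$.

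\emph{Application on $E$.} Bogachev's Corollary~6.6.9 applied to $(\widehat\Phi,E,\nu_1)$ and $\widehat B$ gives $\nu_1|_{\widehat B}\circ\widehat T^{-1}\ll\nu_1$, where $\widehat T=I_E+j_E\circ\widehat\Phi$.

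\emph{Transfer back.} On $E_0$ we have $L\circ\widehat T=T\circ L$, again because $\widehat T(z)-z\in j_E(H)$ and $L$ is linear with $L\circ j_E=\iota$. Also $\widehat T$ maps $E_0$ into $E_0$. Let $A\subset F$ be Borel with $\gamma(A)=0$. Modulo the $\gamma$-null set $B\setminus F_0$,
$$\gamma\bigl(B\cap T^{-1}(A)\bigr)=\nu_1\bigl(\widehat B\cap\widehat T^{-1}(L^{-1}(A\cap F_0))\bigr).$$
The set $L^{-1}(A\cap F_0)$ is $\nu_1$-null, so the right-hand side vanishes by the absolute continuity on $E$. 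Hence $\gamma|_B\circ T^{-1}\ll\gamma$.

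\emph{Main obstacle.} The delicate step is the exact identity $T\circ L=L\circ\widehat T$ and the fact that $\widehat T$ preserves $E_0$. Both rest on $L$ being genuinely linear on the $j_E(H)$-stable subspace $E_0$, not merely a.e. This ensures the preimage computation involves only null-set manipulations through the Borel isomorphism $E_0\to F_0$, with no failure of Lusin-type properties. Measurability of $B\cap T^{-1}(A)$ also needs a remark: pass to $\gamma$-completions, or note that $T$ is $\gamma$-measurable.
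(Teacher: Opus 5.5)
Your proposal is correct and follows the paper's own route: pull $B$ back along $L$, transfer the ``no eigenvalue $-1$'' hypothesis via $D_H\widehat\Phi=D_H\Phi\circ L$, apply Bogachev's Corollary~6.6.9 on $(E,\nu_1)$, and push the conclusion forward to $(F,\gamma)$. You also supply details the paper leaves to ``exactly as in Theorem~\ref{nonlin2}'', namely the intertwining $L\circ\widehat T=T\circ L$ on the $j_E(H)$-stable subspace $E_0$ and the explicit null-set computation, and by restricting to $E_0$ you state the derivative identity where it actually holds rather than ``for all $x$'' as the paper does.
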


\begin{proof}
Transport Bogachev's Corollary 6.6.9 verbatim from $(\widehat\Phi,E,\nu_1)$ to
$(F,\gamma)$ via $L$, exactly as in Theorem~\ref{nonlin2}: writing
$\widehat B:=L^{-1}(B)$, the spectral condition ``no eigenvalue $-1$'' for
$D_H\widehat\Phi(x)$, $x\in\widehat B$, is a statement about the Hilbert--Schmidt
operator $D_H\widehat\Phi(x)$ on $H$ alone, and since $D_H\Phi(L(x))=D_H\widehat\Phi(x)$
for all $x$, it is preserved verbatim under $\Phi=\widehat\Phi\circ L^{-1}$.
\end{proof}

\begin{Remark}
As in Bogachev's own remark following Corollary 6.6.9, it is likely that the
continuity requirement built into $D_H\Phi$ (respectively $D_H\widehat\Phi$) in the
formulations above can be relaxed without affecting Theorem~\ref{nonlin2} or
Corollaries~\ref{cor688}--\ref{cor689}. Since continuity of $D_H\widehat\Phi$ is a
condition on $H$ alone, any such relaxation, once established on $(E,\nu_1)$,
would transport to $(F,\gamma)$ by exactly the same mechanism as above, with no
further work required on the Fr\'echet side.
\end{Remark}

\begin{Remark}
Theorem~\ref{nonlin2} and Corollaries~\ref{cor688}--\ref{cor689} are only the
general local statements; the further Kusuoka-type refinement and the genuinely
singular case (Kusuoka's second theorem, with conditional densities on a
sub-$\sigma$-algebra) are developed in full, together with a systematic
comparison with the frameworks of \"Ust\"unel--Zakai and Nualart, in the
companion paper {\sl Nonlinear transformations of Gaussian measures on
Fr\'echet spaces}, whose local (Kusuoka-type) theorem is the Fr\'echet-space
statement of Theorem~\ref{nonlin2}--Corollary~\ref{cor688}, and whose local
section now also records the transport of Corollary 6.6.9 given here as
Corollary~\ref{cor689}. The mechanism enabling every such transport is exactly
Corollary~\ref{cor.transport}: since $H$-Lipschitz continuity, the
Hilbert--Schmidt norm of $D_H\Phi$, the Gaussian divergence $\delta \Phi$ (in Bogachev's sign
convention, $\delta=-D_H^*$; cf.\ Remark~\ref{rem.sign}), and hence
the density $\Lambda_\Phi$ itself depend only on the pair $(H,\Phi)$ and not on the
ambient topology, they are unaffected by composition with the measure
isomorphism $L$, and every proof carried out on $(E,\nu_1)$ transfers to
$(F,\gamma)$ without modification.
\end{Remark}


\bigskip


\bigskip
\begin{thebibliography}{99}
\bibitem{A}{\bf T.W. Anderson}, {\it The integral of a symmetric unimodal function over a symmetric convex set and some probability inequalities}, Proceedings of the American Mathematical Society, Vol. 6, No. 2, pp. 170–176, (1955)
\bibitem{BC} {\bf A. Badrikian, S. Chevet}, {\sl Mesures cylindriques, espaces de Wiener et fonctions aléatoires gaussiennes}, Lecture Notes in Mathematics 379, Springer (1974)
\bibitem{Ba} {\bf P. Baxendale}, {\it Gaussian measures on function spaces}, American Journal of Mathematics, Vol. 98, No. 4, pp. 891–952,  (1976)
\bibitem{BG}{\bf V.I. Bogachev}, {\sl Gaussian Measures, American Mathematical Society, Providence}, (1998)
\bibitem{BGm}{\bf V.I. Bogachev}, {\sl Measure Theory}, Vol. I–II, Springer, (2007)
\bibitem{BO}{\bf C. Borell}, {\it Gaussian Radon measures on locally convex spaces}, Mathematica Scandinavica, Vol. 38, pp. 265–284, (1976)

\bibitem{DZ}{\bf G. Da Prato, J. Zabczyk}, {\sl Stochastic Equations in Infinite Dimensions}, Cambridge University Press, (1992),2nd edit. (2014)
\bibitem{DFL}{\bf R. Dudley, J. Feldman, L. Le Cam}, {\it On seminorms and probabilities, and abstract Wiener spaces}, Annals of Mathematics, Vol. 93, No. 2, pp. 390–408, (1971)
\bibitem{Fe}{\bf R. Fernholz}, {\it Measurable linear transformations on abstract Wiener spaces}, Journal of Multivariate Analysis, Vol. 7, No. 4, pp. 602–607, (1977)
\bibitem{G}{\bf L. Gross}, {\it Abstract Wiener spaces}, Proceedings of the Fifth Berkeley Symposium on Mathematical Statistics and Probability, Vol. II, pp. 31–42, (1967)
\bibitem{Ke}{\bf A.S. Kechris}, {\sl Classical Descriptive Set Theory}, Graduate Texts in Mathematics 156, Springer, (1995)
\bibitem{K}{\bf G. Kallianpur}, {\it Abstract Wiener processes and their reproducing kernel Hilbert spaces}, Zeitschrift für Wahrscheinlichkeitstheorie und verwandte Gebiete, Vol. 17, pp. 113–123, (1971)
\bibitem{Ku}{\bf H.H. Kuo}, {\sl Gaussian Measures in Banach Spaces}, Lecture Notes in Mathematics 463, Springer, (1975)
\bibitem{R}{\bf B.S. Rajput}, {\it On Gaussian measures in certain locally convex spaces}, Journal of Multivariate Analysis, Vol. 2, No. 3, pp. 282–306, (1972)
\bibitem{Z}{\bf A. Zapała}, {\it Construction and basic properties of Gaussian measures on Fréchet spaces}, Stochastic Analysis and Applications, Vol. 20, No. 2, pp. 445–470, (2002)
\end{thebibliography}
 \end{document}